\tikzset{degil/.style={
		decoration={markings,
			mark= at position 0.5 with {
				\node[transform shape] (tempnode) {$\backslash$};
				%\draw[thick] (tempnode.north east) -- (tempnode.south west);
			}
		},
		postaction={decorate}
	}
} %to make boxes in tikz
\newtheorem{theorem}{Theorem}[section]
\newtheorem{corollary}[theorem]{Corollary}
\newtheorem{example}[theorem]{Example}
\newtheorem{lemma}[theorem]{Lemma}
\newtheorem{proposition}[theorem]{Proposition}
\newtheorem{remark}[theorem]{Remark}
\newtheorem{theorem*}{Theorem}
\newenvironment{proof}{\medskip\emph{Proof.}}{\hfill$\Box$\medskip}
\newcommand{\DD}{\mathcal{D}}
\newcommand{\LL}{\mathcal{L}}
\newcommand{\RR}{\mathcal{R}}
\def\id{\mathrm{id}}
\journal{Journal of \LaTeX\ Templates}
\begin{document}

\begin{frontmatter}

\title{Skew lattices and set-theoretic solutions of the Yang-Baxter equation}
%\tnotetext[mytitlenote]{Fully documented templates are available in the elsarticle package on \href{http://www.ctan.org/tex-archive/macros/latex/contrib/elsarticle}{CTAN}.}

%% Group authors per affiliation:
\author[address]{Karin Cvetko-Vah\corref{mycorrespondingauthor}}
\cortext[mycorrespondingauthor]{Corresponding author}
\ead{karin.cvetko\symbol{64}fmf.uni-lj.si} \author[address2]{Charlotte Verwimp}
\ead{Charlotte.Verwimp@vub.be}
%\address{Radarweg 29, Amsterdam}
\address[address]{University of Ljubljana,
	Faculty of Mathematics and Physics, \newline
	Jadranska 19,
	SI-1001, Ljubljana,
	SLOVENIA.}
\address[address2]{Vrije Universiteit Brussel, Department of
	Mathematics \newline 
	Pleinlaan 2, 1050 Brussel, BELGIUM.}

%\fntext[karin]{\ead{karin.cvetko\symbol{64}fmf.uni-lj.si}}
%\fntext[charlotte]{\ead{Charlotte.Verwimp@vub.be}}

%% or include affiliations in footnotes:
%\author[mymainaddress,mysecondaryaddress]{Elsevier Inc}
%\ead[url]{www.elsevier.com}
%
%\author[mysecondaryaddress]{Global Customer Service\corref{mycorrespondingauthor}}
%\cortext[mycorrespondingauthor]{Corresponding author}
%\ead{support@elsevier.com}
%
%\address[mymainaddress]{1600 John F Kennedy Boulevard, Philadelphia}
%\address[mysecondaryaddress]{360 Park Avenue South, New York}

\begin{abstract}
In this paper we discuss and characterize several set-theoretic solutions of the Yang-Baxter equation obtained using skew lattices, an algebraic structure that has not yet been related to the Yang-Baxter equation. Such solutions are degenerate in general, and thus different from solutions obtained from braces and other algebraic structures.  

Our main result concerns a description of a set-theoretic solution of the Yang-Baxter equation, obtained from an arbitrary skew lattice. We also provide a construction of a cancellative and distributive skew lattice on a given family of pairwise disjoint sets.
\end{abstract}

\begin{keyword}
Yang-Baxter equation, set-theoretic solutions, skew lattice, noncommutative lattice, distributive.
\MSC[2010] 16T25, 03G10, 06B75.
\end{keyword}

\end{frontmatter}

%\linenumbers

\section*{Introduction}

%\subsection{Set-theoretic solution of the Yang-Baxter equation}
The study of the Yang-Baxter equation is a fundamental topic in theoretical physics and fundamental mathematics. It laid foundation to the theory of quantum groups, and is used in many other fields like Hopf algebras and statistical mechanics. As finding arbitrary solutions is relatively complicated, Drinfeld \cite{D:1992} proposed to focus on a smaller set of solutions, the so-called set-theoretic solutions of the Yang-Baxter equation. 
A \emph{set-theoretic solution} of the Yang-Baxter equation is a pair $(X,r)$ where $X$ is a non-empty set and $r: X \times X \rightarrow X \times X: (x,y) \mapsto (\lambda_x(y),\rho_y(x))$ is a map satisfying
\begin{equation}
(r\times \text{id})\circ (\text{id} \times r)\circ (r\times \text{id})=(\text{id} \times r)\circ (r\times \text{id})\circ (\text{id} \times r).
\end{equation}
We briefly call this a \emph{solution}. An easy example of a solution is the twist map, defined as $r(x,y)=(y,x)$, for all $x,y \in X$. Moreover, this solution is \emph{involutive} (i.e. $r^2= \text{id}_{X^2}$) and \emph{non-degenerate}, which means that the solution is both \emph{left non-degenerate} (i.e. $\lambda_x: X \rightarrow X$ is bijective, for all $x \in X$) and \emph{right non-degenerate} (i.e. $\rho_x:X \rightarrow X$ is bijective, for all $x \in X$). If a solution is neither left nor right non-degenerate, then it is called \emph{degenerate}. 
Next to the important class of involutive solutions, there is another relevant class of set-theoretic solutions where the map $r$ is idempotent (i.e. $r^2=r$). These solutions are called \emph{idempotent}. A set-theoretic solution $(X,r)$ is said to be \emph{cubic} if $r^3=r$. Note that the class of cubic solutions contains the class of idempotent solutions and the class of involutive solutions.

%A solution is called \emph{involutive} if $r^2=id$. Moreover, it is called \emph{left} (resp. \emph{right}) \emph{non-degenerate} if $\lambda_x$ (resp. $\rho_x)$ is bijective for all $x \in X$. A set-theoretic solution is called \emph{non-degenerate} if it is both left and right non-degenerate, and degenerate if it's neither left nor right non-degenerate. A set-theoretic solution is said to be \emph{idempotent} if $r^2=r$, and \emph{cubic} if $r^3=r$. Note that the class of cubic solutions contains the class of idempotent solutions and the class of involutive solutions.

Non-degenerate (involutive) set-theoretic solutions of the Yang-Baxter equation have been studied intensively over the past years, see for example \cite{ESS:1999,GIVDB:1998,LYZ:2000,S:2000}. The study of set-theoretic solutions of the Yang-Baxter equation became even more attractive since the discovery of related algebraic structures. In \cite{R:2005,R:2007}, Rump introduced cycle sets and braces to investigate non-degenerate involutive solutions. 
Later braces were generalized by Guarnieri and Vendramin \cite{GV:2017} to study non-degenerate solutions that are not necessarily involutive. Catino, Colazzo, and Stefanelli \cite{CCS:2017}, and Jespers and Van Antwerpen \cite{JVA:2019} introduced (left cancellative) left semi-braces to deal with solutions that are not necessarily non-degenerate or solutions that are idempotent or cubic. Some of these solutions are degenerate. However, in general, not much is known about degenerate solutions and how to construct them.
%\newline
%\newline

Next to involutive solutions, another important class of set-theoretic solutions of the Yang-Baxter are the idempotent solutions. Using idempotent solutions and graphical calculus from knot theory, Lebed in \cite{L:2017} provides a unifying tool 
%for computing the Hochschild (co)homology of 
to deal with several diverse algebraic structures, such as free and free commutative monoids, factorizable monoids, plactic monoids, Young tableaux and distributive lattices. The latter are important in the following way.
A \emph{lattice} is a triple $(L,\land,\lor)$ where both $(L,\land)$ and $(L,\lor)$ are commutative bands (i.e. both operations are commutative, associative and idempotent) satisfying the absorption laws
%\begin{align*}
$x\land(x\lor y) =x=x \lor (x\land y),$ 
%\end{align*}
for all $x,y\in L$. To each lattice $(L,\land,\lor)$ one may associate an idempotent map $r:L\times L \rightarrow L\times L$ defined as $r(x,y)=(x \land y, x \lor y)$. Moreover, $(L,r)$ is a solution of the Yang-Baxter equation if and only if $L$ is a distributive lattice, i.e. for any $x,y,z \in L$, $x\land(y\lor z) =(x\land y)\lor(x\land z)$ and $x\lor(y\land z) =(x\lor y)\land(x\lor z)$.
\newline
\newline
In this paper we generalize the idea of using lattices to obtain idempotent and cubic set-theoretic solutions of the Yang-Baxter equation. In particular, we use a recently intensively studied algebraic structure (\cite{Cvet07, Cv2011, canc, update, dist, L1, L3, L4, L6, S:2000}), called a skew lattice, to produce set-theoretic solutions of the Yang-Baxter equation. Furthermore, these solutions are degenerate in general.
\newline
\newline
The paper is organized as follows. The first section contains some preliminaries on skew lattices. We include basic knowledge of skew lattices which is needed for the rest of the paper.

As we try to find solutions using skew lattices, it is important to have many examples of skew lattices or even be able to construct skew lattices on given sets. In Section 2 we provide such a construction, where we define a skew lattice structure on a given family of pairwise disjoint sets.

Section 3 contains the main result of this paper, namely the description of an idempotent set-theoretic solutions of the Yang-Baxter equation associated to an arbitrary skew lattice.
\begin{theorem}
	Let $(S,\land,\lor)$ be a skew lattice. Then the map defined by $r(x,y)=((x \land y) \lor x, y)$ is an idempotent set-theoretic solution of the Yang-Baxter equation.
\end{theorem}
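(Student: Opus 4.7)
Since the map $r$ acts as the identity on the second coordinate, writing $\phi(x,y) := (x \land y) \lor x$ yields $r(x,y) = (\phi(x,y), y)$. Idempotency of $r$ is therefore equivalent to the single identity $\phi(\phi(x,y), y) = \phi(x,y)$. Applying both sides of the Yang--Baxter equation to an arbitrary triple $(x,y,z)$ and comparing coordinates shows that the YBE is equivalent to the pair of identities
\[
\phi(\phi(x,y),\phi(y,z)) = \phi(x,\phi(y,z)), \qquad \phi(\phi(y,z), z) = \phi(y,z),
\]
the second being a special case of the idempotency identity. Thus the entire theorem reduces to verifying these two identities about $\phi$.

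The crux is the auxiliary lemma $\phi(x,y) \land y = x \land y$, i.e.\ $((x \land y) \lor x) \land y = x \land y$. Setting $u := x \land y$, one immediately has $u \land y = u$ (by associativity and idempotence of $\land$) and $u \lor y = y$ (from the absorption law $(x \land y) \lor y = y$), so the claim becomes $(u \lor x) \land y = u$. I would prove this by exploiting Leech's structure theorem: $u \lor x$ lies in the $\DD$-class $[x]$ of $x$ (since $[u] \lor [x] = [x \land y] \lor [x] = [x]$ in the lattice $\D{S}$), whence $(u \lor x) \land y$ lies in $[x] \land [y] = [u]$. Within the rectangular skew lattice $[u]$, where $p \land q = q \lor p$ for all $p, q$, an application of the absorption laws forces this element to equal $u$.

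Given the auxiliary lemma, idempotency is immediate: letting $a := \phi(x,y)$, one computes
\[
\phi(a, y) = (a \land y) \lor a = (x \land y) \lor ((x \land y) \lor x) = (x \land y) \lor x = \phi(x,y),
\]
using the lemma together with associativity and idempotence of $\lor$. The three-variable YBE identity is then handled analogously: a generalisation of the auxiliary lemma yields $\phi(x,y) \land t = x \land t$ for any $t \in [y]$, applied in particular to $t = \phi(y,z) \in [y]$. Substituting gives $\phi(\phi(x,y),\phi(y,z)) = (x \land \phi(y,z)) \lor \phi(x,y)$, and a further absorption step, exploiting that $\phi(x,y) \in [x]$ together with the structure of the $\DD$-class $[x]$, reduces this to $(x \land \phi(y,z)) \lor x = \phi(x, \phi(y,z))$.

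The main obstacle is the auxiliary lemma in its generic form, where $x$ and $y$ lie in distinct, incomparable $\DD$-classes; the easier cases ($x \leq y$, $y \leq x$, and $x \Drel y$) can be dispatched by direct appeal to the absorption laws and the rectangular identity $p \land q = q \lor p$, while the remaining general case demands a careful orchestration of the four absorption laws with Leech's decomposition of a skew lattice as a lattice of rectangular sub-skew-lattices.
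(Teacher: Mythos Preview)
Your reduction of the problem to the two $\phi$-identities is correct and matches the paper's implicit set-up. The auxiliary identity $\phi(x,y)\land y = x\land y$ is also true. However, your proposed proof of it has a genuine gap: knowing that $(u\lor x)\land y$ lies in the $\DD$-class $[u]$ does \emph{not} force it to equal $u$. A rectangular skew lattice can have many elements, and nothing in ``$p\land q = q\lor p$'' singles out $u$; you would need to exhibit a specific relation (e.g.\ $u\leq$ or $u\,\RR\,$ something) that pins the element down. You acknowledge this as ``the main obstacle,'' and indeed the case analysis you sketch ($x\leq y$, $y\leq x$, $x\,\DD\,y$, incomparable) does not close the gap, because in the incomparable case the $\DD$-class $[x\land y]$ is typically nontrivial and your argument gives no further information about which element you have landed on. The same vagueness recurs in your ``further absorption step'' for the three-variable identity.

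The paper avoids this difficulty by a different mechanism: it invokes Leech's second decomposition (Theorem~\ref{thm:lrhanded} in the paper), reducing to the left-handed and right-handed cases separately. In the left-handed case one has $(x\land y)\lor x = (x\land y\land x)\lor x = x$ by absorption, so $\phi(x,y)=x$ and everything is trivial. In the right-handed case $\phi(x,y)$ coincides with the \emph{lower update} $x\lfloor y\rfloor$, for which the paper has the ready-made identity $x\lfloor y\rfloor\land y = y\land x\land y$ (which in right-handed form is exactly your auxiliary lemma), and the three-variable identity is handled by coset theory (Lemma~\ref{lemma:update}). Your approach can in fact be completed along similar lines: once you adopt the left/right split, your generalised lemma $\phi(x,y)\land t = x\land t$ for $t\,\DD\,y$ follows in the right-handed case from $t = y\land t$ and the basic auxiliary identity, and your ``absorption step'' $(x\land\phi(y,z))\lor\phi(x,y) = (x\land\phi(y,z))\lor x$ follows from $m\lor(x\land y) = m$ for $m = x\land\phi(y,z)\,\DD\,(x\land y)$. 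So your route is viable, but it ultimately needs the same left/right decomposition that the paper uses; the purely $\DD$-class argument you outline is not enough on its own.
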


Inspired by the solution obtained from a distributive lattice, in Section 4 we define and describe \emph{strong distributive solutions} of the Yang-Baxter equation. These are skew lattices $(S, \land, \lor)$ such that the map $r:S \times S \rightarrow S \times S: (x,y) \mapsto (x \land y, x \lor y)$ is a set-theoretic solution of the Yang-Baxter equation. We prove that skew lattices, which are simultaneously \emph{strongly distributive} (i.e. they satisfy $x\land (y\lor z)=(x\land y)\lor (x\land z)$, $(x\lor y)\land z=(x\land z)\lor (y\land z)$) and \emph{co-strongly distributive} (i.e. they satisfy $x\lor (y\land z)=(x\lor y)\land (x\lor z)$, $(x\land y)\lor z=(x\lor z)\land (y\lor z)$), are always strong distributive solutions. However, the converse is not true.
%{\color{orange}and that strong distributive solutions are in their turn always distributive and cancellative} {\color{violet} (also define these?)} {\color{blue} We can. Or we can just delete the orange part.} {\color{orange}skew lattices.}

The final section is devoted to some other distributive solutions. In particular we define left, right and weak distributive solutions and describe them in terms of properties of skew lattices introduced in the first section. A surprising result is that for symmetric skew lattices, being a left, right or weak distributive solution are all equivalent notions. For lattices, it is even equivalent to being a strong distributive solution.

\section{preliminaries}

\subsection{Skew lattices}
Skew lattices were first introduced by Pascual Jordan in \cite{jordan}. Later the definition of a skew lattice was slightly adopted by Jonathan Leech, who initiated the modern study of these structures in \cite{L1}. 
A \emph{skew lattice} is a set $S$ endowed with a pair of idempotent and associative operations  $\land$ and $\lor$ which satisfy the absorption laws
\[
x\land (x\lor y)=x=x\lor (x\land y)\text{ and } (x\land y)\lor y=y=(x\lor y)\land y.
\]  
By a result of Leech \cite{L1} the following pair of dualities hold in any skew lattice,
\begin{align*}
x\land y = x \text{ iff } x\lor y = y, \\
x\land y = y \text{ iff } x\lor y = x.
\end{align*}

Recall \cite{How} that a \emph{band} is a semigroup of idempotents. A band is called \emph{regular} if it satisfies the identity $axaya=axya$. A complete list of varieties of bands can be found in \cite{petrich}. Leech \cite[Theorem 1.15]{L1} proved that given a skew lattice $S$, both semigroups $(S, \land)$ and $(S, \lor)$ are regular bands, i.e. the identities 
\begin{align}
a\land x\land a\land y\land a &= a\land x\land y\land a,\label{eq:reg1}
\\ a\lor x\lor a\lor y\lor a &= a\lor x\lor y\lor a,\label{eq:reg2}
\end{align}
are always satisfied.

%{\color{blue} Comment: This terminology is standard. Since bands are always regular as semigroups (i.e. Von Neumann regular), no confusion arises from this terminology.}

\emph{Green's equivalence relations} $\LL, \RR, \DD, \mathcal H$ and $\mathcal J$ are fundamental tools in the theory of semigroups. Only the first three are relevant in the study of bands, as for bands $\DD=\mathcal J$ and $\mathcal H$ is the diagonal relation. We refer the reader to \cite{How} for the definition of Green's relation for general semigroups. By \cite[Lemma I.7.1]{petrich-cr} in the case of bands the definitions simplify as follows:
\[
\begin{array}{rcl}
x \LL y & \text{ iff } & xy =x, \, yx=y,\\
x \RR y & \text{ iff } & xy =y, \,yx=x,\\
x \DD y & \text{ iff } & xyx =x, \, yxy=y.
\end{array}
\]
On any semigroup $S$, relation $\LL$ is a right congruence ($a\,\LL\, b$ implies $ac\,\LL\,bc$ for any $c\in S$), relation $\RR$ is a left congruence ($a\,\RR\, b$ implies $ca\,\RR\, cb$ for any $c\in S$), see \cite[Proposition 2.1.2]{How}. Moreover, each $\DD$-class is a union of $\LL$-classes and also a union of $\RR$-classes.  The intersection of an $\LL$-class with an $\RR$ class is either empty or it is an $\mathcal H$-class. Because of this property a $\DD$-class is sometimes visualized as an 'eggbox', with rows corresponding to $\RR$-classes, columns corresponding to $\LL$-classes, and intersections of rows and columns corresponding to $\mathcal H$-classes (with the latter being singletons in the case of bands). A semigroup satisfying the identity $x\land y=x$ is called a \emph{left-zero semigroup}, and a semigroup satisfying $x\land y=y$ is called a \emph{right-zero semigroup}.

A skew lattice $(S,\land,\lor)$ is given by a pair of bands $(S,\land)$ and $(S,\lor)$. We denote the corresponding Green's relations by $\LL_\land$, $\RR_\land$, $\DD_\land$, and $\LL_\lor$, $\RR_\lor$, $\DD_\lor$, respectively.
By Leech's First Decomposition Theorem \cite[Theorem 1.7]{L1},  Green's relations $\DD_\land$ and $\DD_\lor$ on any skew lattice $S$ coincide (and are thus denoted simply by $\DD$), this relation $\DD$ is a congruence, $S/\DD$ is the maximal lattice image of $S$, and each $\DD$-class is a \emph{rectangular skew lattice}, meaning that it is a skew lattice  satisfying the additional identities $x\land y\land z=x\land z$ and $x\lor y=y\land x$. In general,  Green's relation $\DD$ on a semigroup need not be a congruence. However, by Clifford-McLean Theorem, relation $\DD$ is a congruence on any band, and any band factorizes as a \emph{semilattice}
(commutative band) of \emph{rectangular bands} (characterized by the identity $xyz=xz$), see \cite[Theorem3]{clifford} and \cite[Theorem 1]{mclean}. In what follows we denote by $\DD_x=\{t \in S \mid x \,\DD\, t\}$ the $\DD$-class of an element $x$ of $S$.

Moreover, on a skew lattice $S$ we obtain $\RR_\land=\LL_\lor $ (which is denoted by $\RR$), $\RR_\lor=\LL_\land$ (which is denoted by $\LL$). 
A skew lattice is called \emph{left handed} if $\LL=\DD$, and it is called \emph{right handed} if $\RR=\DD$. A skew lattice is thus left handed if and only if it satisfies the identity $x\land y\land x=x\land y$, or equivalently, $x\lor y\lor x=y\lor x$, and it is right handed if and only if it  satisfies  $x\land y\land x = y\land x$, or equivalently, $x\lor y\lor x=x\lor y$. By Leech's Second Decomposition Theorem for skew lattices  \cite[Theorem 1.15]{L1}, Green's relations $\RR$ and $\LL$ are congruences on any  skew lattice $S$, and $S$ factors as a fiber product of a left handed skew lattice $S/\RR$ (called the \emph{left factor} of $S$) by a right handed skew lattice $S/\LL$ (called the \emph{right factor} of $S$) over their common maximal lattice image. Leech's Second Decomposition Theorem  is a skew lattice version of the Kimura Theorem for regular bands \cite[Theorem 4]{kim}. 
In particual, if $S$ is a rectangular skew lattice (which is equivalent to $S$ having exactly one $\DD$-class), then $S$  factors as direct product $S\cong L\times R$ of a {left-zero semigroup} $L$ (satisfying $x\land y=x$) by a {right-zero semigroup} $R$ (satisfying $x\land y=y$).

The \emph{natural preorder} on a skew lattice $S$ is given by $x\preceq y$ if and only if $x\land y\land x=x$, or equivalently, $y\lor x\lor y=y$. Note that $x\preceq y$ together with $y\preceq x$ is equivalent to $x\,\DD\, y$. Moreover, $x\preceq y$ holds in $S$ if and only if $\DD_x\leq \DD_y$ holds in the lattice $S/\DD$. The \emph{natural partial order} is given by $x\leq y$ if and only if $x\land y=x=y\land x$, or equivalently, $x\lor y=y=y\lor x$. Note that $x\leq y$ implies $x\preceq y$.

The following result of \cite[Corollary 3]{Cvet07} is used frequently through this paper. 
%{\color{violet} Is this a theorem or proposition?} {\color{blue} I think it can be either, we just have to make sure that we refer to it properly.}
\begin{theorem}\label{thm:lrhanded}
	A skew lattice satisfies an identity or an equational implication if and only if both its left and right factor satisfy this identity or equational implication. 
\end{theorem}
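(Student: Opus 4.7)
The plan is to prove both directions by exploiting the fiber‑product decomposition supplied by Leech's Second Decomposition Theorem, which is already stated in the preliminaries.

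First I would dispatch the forward direction. Both maps $\pi_\RR\colon S\to S/\RR$ and $\pi_\LL\colon S\to S/\LL$ are surjective skew‑lattice homomorphisms, because Leech's Second Decomposition Theorem asserts that $\RR$ and $\LL$ are congruences on $S$ with respect to both $\land$ and $\lor$. Since the homomorphic image of an algebra always satisfies every identity and every quasi‑identity (equational implication) satisfied by the original, if $S$ satisfies such a sentence $\varphi$ then so do $S/\RR$ and $S/\LL$.

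For the backward direction I would use the fact, stated in the excerpt, that $S$ factors as a fiber product of $S/\RR$ and $S/\LL$ over their common maximal lattice image $S/\DD$. Concretely, I would consider the diagonal map
\[
\iota\colon S\longrightarrow S/\RR\times S/\LL,\qquad x\longmapsto \lmb [x]_\RR,[x]_\LL\rmb .
\]
Because $\RR$ and $\LL$ are congruences, $\iota$ is a homomorphism of skew lattices; because $\RR\cap\LL=\mathcal H$ is the identity relation on a band (the intersection of an $\RR$-class and an $\LL$-class is a singleton $\mathcal H$-class), $\iota$ is injective. Hence $S$ embeds as a subalgebra of the direct product $S/\RR\times S/\LL$ (its image is exactly the fiber product subalgebra). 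Now suppose both factors satisfy the identity or equational implication $\varphi$. Identities and quasi-identities are preserved under taking direct products and under taking subalgebras, so $S/\RR\times S/\LL$ satisfies $\varphi$, and therefore $S$ does as well.

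The main obstacle is purely bookkeeping rather than substantive: one has to make sure that the preservation argument applies to equational implications, not only to identities. This is fine because an equational implication is a universally quantified Horn sentence, and such sentences are preserved by subalgebras and by direct products of arbitrary arity. Since I only need the case of a binary product of two skew lattices here, no appeal to ultraproducts or any deeper universal-algebra machinery is needed. Once the embedding $\iota$ and the congruence properties of $\RR,\LL$ are in hand, both directions close immediately.
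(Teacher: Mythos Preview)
Your backward direction is correct: the diagonal map $\iota$ is a homomorphism because $\RR$ and $\LL$ are congruences, it is injective because $\RR\cap\LL=\mathcal H$ is the identity relation on a band, and universal Horn sentences are preserved under subalgebras and direct products.

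The forward direction, however, has a genuine gap in the case of equational implications. Your assertion that ``the homomorphic image of an algebra always satisfies every identity and every quasi-identity (equational implication) satisfied by the original'' is false for quasi-identities. Universal Horn sentences are preserved under subalgebras and products---this is precisely what characterizes quasi-varieties---but they are \emph{not} in general preserved under homomorphic images; if they were, every quasi-variety would be a variety. (A sanity check from elsewhere: $(\bZ,+)$ satisfies $x+x=0\to x=0$, but its quotient $\bZ/2\bZ$ does not.) So knowing only that $S/\RR$ and $S/\LL$ are surjective images of $S$ does not let you push an equational implication from $S$ down to its factors. Your final paragraph correctly checks the preservation properties needed for the backward direction, but it does nothing to repair the forward one.

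The paper does not supply its own proof here; it cites \cite[Corollary~3]{Cvet07}. As the title \emph{Internal decompositions of skew lattices} indicates, the point of that reference is that the left and right factors are not merely quotients of $S$ but can be realized as \emph{sub}-skew lattices of $S$. With that in hand the forward direction for equational implications follows immediately, since subalgebras do inherit Horn sentences. That internal embedding of $S/\RR$ and $S/\LL$ into $S$ is the missing ingredient in your argument.
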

Thus, to prove a result for a skew lattice $S$ it is enough to prove it first assuming that $S$ is left handed (i.e. for any $x,y \in S$, $x\land y\land x=x\land y$, or equivalently, $x\lor y\lor x=y\lor x$) and next assuming that $S$ is right handed (i.e. for any $x,y \in S$, $x\land y\land x=y\land x$, or equivalently, $x\lor y\lor x=x\lor y$).

\subsection{Varieties of skew lattices}
A class of algebras is called a \emph{variety} if it is closed under homomorphic images, substructures and direct products. By Birkhoff's Theorem \cite[Theorem 11.9]{BS} a class of algebras is a variety if and only if it is equationally defined, i.e. if it is defined by a set of identities. 

Amongst all properties of skew lattices, symmetry is a very fundamental one. A skew lattice is called \emph{symmetric}\label{symmetric} if for any $x,y\in S$, $x\land y=y\land x$ if and only if $x\lor y=y\lor x.$ Symmetric skew lattices form a variety by a result in \cite[Subsection 2.3]{L1}.

%{\color{violet} COMMENT: Is it not too detailed if we add here also lower and upper symmetry?} {\color{blue} Yes, I think it is.}

%{\color{blue} COMMENT: I think we should definitely keep the above paragraph. It is important to understand what really is the class of algebras for which we are able to prove that they are strong distributive solutions. {\color{violet} I've put this after the definitions of normal and conormal.}
%I do, however, agree that we delete the remark below, especially the proof of it since it is not original. 
%I would also like to keep some discussion on the relation between different varieties. {\color{violet} Normally, I've added all the discussion that was in the preliminaries before somewhere else. Or do you mean the diagram? If so, maybe we should say something like {\color{blue} Looks great.}

More varieties are defined throughout the paper. Below we provide a diagram to give an overview on the varieties that are used in this paper. 

\[\begin{tikzcd}[column sep = -1.6cm, row sep = 0.6cm]
& & \text{Strongly and co-strongly distr.} 
\arrow{dl} \arrow{dr} & &
\\ 
& \text{Strongly distr. (p.\pageref{strongly distributive})} \arrow{dr} \arrow{dl} & & \text{Co-strongly distr. (p.\pageref{co-strongly distributive})} 
\arrow{dl} \arrow{dr} & 
\\ 
\text{Normal (p.\pageref{normal})} & & \text{Distr. and canc.} \arrow{dl} \arrow{dr} & & \text{Conormal (p.\pageref{conormal})}
\\
& \text{Canc. (p.\pageref{cancellative})} \arrow{dl} \arrow{dr} & & \text{Simply canc. and distr.} \arrow{dl} \arrow{dr} & 
\\ 
\text{Symmetric (p.\pageref{symmetric})} & & \text{Simply canc. (p.\pageref{simply cancellative})} \arrow{dr} & & \text{Distr. (p.\pageref{distributive})} \arrow{dl} 
\\
& & & \text{Quasi distr. (p.\pageref{quasi-distributive})} &
\end{tikzcd}\]
The reader might find the diagram above useful when thinking about the partial order between different varieties of skew lattices. Note that there might be other varieties of skew lattices that are not shown in the diagram and lie in between the varieties that are shown. The diagram therefore provides the information regarding the partial order on the set of listed varieties, but it does not imply for instance that quasi-distributive skew lattices form the join of the variety of  simply cancellative and the variety of distributive skew lattices.

\subsection{Skew lattices in rings}
Let $R$ be a ring
and  $E(R)=\{e\in R:e^2=e \}$ the set of all idempotents in $R$. We say that a subset
$S\subset E(R)$ is a \emph{multiplicative band} if $xy\in E(R)$ for all $x,y\in E(R).$
Given a multiplicative band $S$ in a ring $R$ there are two natural ways to define the join operation on $E(R)$ (letting the meet be  multiplication):
\begin{itemize}
	\item[(i)] \emph{the quadratic join:} $x\circ y=x+y-xy$;
	\item[(ii)] \emph{the cubic join:} $x\nabla y=\left(  x\circ
	y\right)  ^{2}=x+y+yx-xyx-yxy.$
\end{itemize}
In general, given $x,y\in E(R)$, $x\circ y$
need not
be an idempotent. On the other hand, whenever $E(R)$ is multiplicative we obtain $x\nabla y\in E(R)$ for all $x,y\in E(R)$. However, $\nabla$ need not be associative. Note that if $x\circ y\in E(R)$  then $x\nabla y=x\circ y$.

Leech \cite{L1} proved the following pair of results,
\begin{itemize}
	\item[(i)] If $(S,\cdot)$ is a multiplicative band that is
	closed under $\circ$, then $S$ is a skew lattice, called \emph{a quadratic
		skew lattice.}
	\item[(ii)]
	If $(S,\cdot)$ is a multiplicative band  that is also closed under $\nabla$, with  $\nabla$ being associative on $S$, then $S$ is
	a skew lattice, called \emph{a cubic skew lattice.}
\end{itemize}

Moreover, the following was proven in \cite{L1} and \cite{L4}.

\begin{itemize}
	\item[(i)] Every maximal right [left] regular band in a ring forms a quadratic skew lattice.  Every  right [left] regular band in a ring generates a quadratic skew lattice. (A band is called \emph{right regular} if it satisfies the identity $xyx=yx$; it is called \emph{left regular} if it satisfies the identity $xyx=xy$.)
	\item[(ii)]
	Every maximal normal band in a ring forms a  normal cubic skew lattice. Every normal band in a ring generates a normal  cubic skew lattice.
\end{itemize}

Quadratic as well cubic skew lattices in rings are distributive and cancellative by  \cite{L1}.

\section{Construction}

In this section we show a construction of a class of skew lattices that are not lattices (i.e. a skew lattice where both operations $\land$ and $\lor$ are commutative). 
In particular, we construct a skew lattice on an arbitrary family of pairwise disjoint sets. The constructed skew lattice is distributive and cancellative, where
a skew lattice is called \emph{(fully) cancellative}\label{cancellative} if it satisfies the following pair of implications 
\begin{align}
x \lor y = x \lor z, x \land y = x \land z \Longrightarrow y = z,  \label{C1}\\
x \lor z = y \lor z, x \land z = y \land z \Longrightarrow x = y. \label{C2}
\end{align}
Cancellative skew lattices form a variety by a result in \cite{canc}. Following Leech \cite{L1}, a skew lattice is called \emph{distributive}\label{distributive} if it satisfies the following pair of identities
\begin{align}
x\land (y\lor z)\land x &= (x\land y\land x)\lor (x\land z\land x),\label{eq:D1}
\\ x\lor (y\land z)\lor x &= (x\lor y\lor x)\land (x\lor z\lor x).\label{eq:D2}
\end{align}
Unlike the situation for lattices, the identities \eqref{eq:D1} and \eqref{eq:D2} are in general independant. However, by Spinks' Theorem \cite{spinks}, \eqref{eq:D1} and \eqref{eq:D2} are equivalent for symmetric skew lattices. 
%{\color{red} (We did not define symmetric SL yet, so do we just delete this?)}
%{\color{blue} I would keep it, as well as the definition of symmetry in the Preliminaries.}
Note that for lattices distributivity is defined differently. In fact, as we will see later, there are several ways to generalize the notion of distributivity to the non-commutative setting. 
\newline
\newline
Let $(I,\leq)$ be a totally ordered set and $S= \bigcupdot_{i \in I} A_i$ the disjoint union of a family of pairwise disjoint sets. We define operations $\land$ and $\lor $ on $S$ as follows. Take any $x,y\in S$ and let $i,j\in I$ be such that $x\in A_i$, $y\in A_j$. Define
\[
x\land y=
\left\{
\begin{array}{ll}
x & \text{if } i<j \\
y & \text{if } j\leq i
\end{array},
\right.
\qquad
x\lor y=
\left\{
\begin{array}{ll}
y & \text{if } i<j \\
x& \text{if } j\leq i
\end{array}.
\right.
\]

\begin{proposition}\label{prop:construction}
	Let $(I,\leq)$ be a totally ordered set and $S= \bigcupdot_{i \in I} A_i$ a disjoint union of a family of pairwise disjoint sets. Then, $(S;\land, \lor)$ is a distributive and  cancellative skew lattice, where $\land$ and $\lor$ are defined as above.
\end{proposition}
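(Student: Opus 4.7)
The plan is to first extract a compact description of the two operations and then verify each axiom by a uniform case analysis on indices. The key structural observation is that for $x \in A_i$ and $y \in A_j$ we have $x \land y \in A_{\min(i,j)}$ and $x \lor y \in A_{\max(i,j)}$; on each $A_i$ the restriction of $\land$ is a right-zero operation and that of $\lor$ is a left-zero operation, while across distinct classes the outputs are dictated purely by the order on $I$. In particular, each $A_i$ will be a rectangular $\DD$-class and $S/\DD$ will be the chain $I$.

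Verifying the skew lattice axioms is routine. Idempotence is immediate. Each of the four absorption laws splits into the three cases $i < j$, $i = j$, $j < i$ and collapses in each to $x$ or $y$ by direct inspection. For associativity of $\land$, both $(x \land y)\land z$ and $x \land (y \land z)$ land in $A_{\min(i,j,k)}$; a case split on which of $i,j,k$ attains this minimum, together with the right-zero tie-breaking rule, forces equality of the two products. Associativity of $\lor$ is the dual argument with $\max$ and the left-zero rule.

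For distributivity, I would first record the compact shortcuts
\[
x \land y \land x = \begin{cases} x & \text{if } i \leq j, \\ y & \text{if } j < i, \end{cases} \qquad
x \lor y \lor x = \begin{cases} y & \text{if } i < j, \\ x & \text{if } j \leq i. \end{cases}
\]
Using $y \lor z \in A_{\max(j,k)}$, the left-hand side of \eqref{eq:D1} depends only on whether $i \leq \max(j,k)$, while the right-hand side splits into the four combinations of ``$i \leq j$ vs.\ $j < i$'' and ``$i \leq k$ vs.\ $k < i$''. Direct inspection shows the two sides agree in every combination, and \eqref{eq:D2} is handled symmetrically from the second shortcut.

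For cancellativity, suppose $x \land y = x \land z$ and $x \lor y = x \lor z$, with $x \in A_i$, $y \in A_j$, $z \in A_k$. Comparing $\DD$-classes forces $\min(i,j) = \min(i,k)$ and $\max(i,j) = \max(i,k)$; the multisets $\{i,j\}$ and $\{i,k\}$ therefore coincide, and cancelling $i$ gives $j = k$. With $y, z \in A_j$, whichever of the cases $i < j$, $i = j$, $i > j$ applies, exactly one of the two assumed equations reduces via the right-zero or left-zero structure to $y = z$. Implication \eqref{C2} is dual. The only real obstacle here is keeping the case analysis organised; no individual case is intricate.
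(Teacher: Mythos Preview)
Your argument is correct and essentially self-contained. The organisation of the skew-lattice axioms and the distributive identities via the index $\min/\max$ description, together with the right-zero/left-zero behaviour inside a class, matches the paper's approach; the paper phrases associativity as ``the minimal element appearing rightmost'' and splits the distributivity check according to whether $j=k$ or $j\neq k$, but this is the same case analysis in different clothing.

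The one genuine difference is cancellativity. The paper does not verify \eqref{C1}--\eqref{C2} by hand: it observes that $S/\DD\cong I$ is a chain, so $S$ is a \emph{skew chain}, and then invokes the known fact (\cite{Cvet06}) that skew chains are always cancellative. Your multiset argument ($\min(i,j)=\min(i,k)$ and $\max(i,j)=\max(i,k)$ force $j=k$, after which one of the two hypotheses collapses to $y=z$) is an elementary direct proof that avoids this citation. The trade-off is the usual one: the paper's route is shorter and situates the example inside an existing theory, while yours keeps the proposition independent of external results. Both are perfectly valid here.
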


%We claim that $(S;\land, \lor)$ is a distributive and  cancellative skew lattice, and thus a left distributive solution by Theorem \ref{th:solution-dist-canc}.  

\begin{proof}
	Note that $x\land (y\land z)$ and $(x\land y)\land z$ both reduce to the minimal element of $\{x,y,z\}$ that appears most right in the expression $x\land y\land z$. Similarly,  $x\lor (y\lor z)$ and $(x\lor y)\lor z$ both reduce to the maximal element of $\{x,y,z\}$ that appears most left in the expression $x\lor y\lor z$. Hence, both $\land$ and $\lor$ are associative operations.
	
	To prove the absorption laws, let $x\in A_i$, $y\in A_j$. Assume first that $i<j$. Then $x\land (x\lor y)=x\land y=x$. On the other hand, if $i\geq j$ then $x\land (x\lor y)=x\land x=x$. The rest of the absorption laws are proven in a similar fashion.
	
	Note that given  $x\in A_i$ and $y\in A_j$, $x\,\DD\, y$ is equivalent to $i=j$. Moreover, $x\in A_i$ commutes with $y\in A_j$ for either of the operations $\land $, $\lor$, if and only if $i\neq j$.
	By construction, $S$ is a \emph{skew chain} which means that its maximal lattice image $S/\DD$ is totally ordered (in our case isomorphic to $I$). Skew chains are always cancellative by  \cite[Proposition 5]{Cvet06}. 
	
	It remains to prove that $S$ is distributive. Take any $x,y,z\in S$ with $i,j,k\in I$ such that $x\in A_i$, $y\in A_j$ and $z\in A_k$, and consider the elements $\alpha=x\land (y\lor z)\land x$, $\beta=(x\land y\land x)\lor (x\land z\land x).$ We consider the following cases:
	\begin{itemize}
		%\item If $i<j$ and $i<k$ then $\alpha=x=\beta$. 
		\item Assume $j=k$. Then  $y\,\DD\, z$ and thus $(x\land y\land x)\,\DD\, (x\land z\land x)$, which means that there exists $l\in I$ such that $x\land y\land x, x\land z\land x$ both lie in $A_l$. It follows that $\alpha=x\land y\land x$ and likewise $\beta=x\land y\land x$. 
		\item Assume $j \neq k$. Then $y$ and $z$ commute for either of the operations $\land$, $\lor$, with either $y\land z=z\land y=y$ and $y\lor z=z\lor y=z$, or $y\land z=z\land y=z$ and $y\lor z=z\lor y =y$. Thus by the definition of the natural partial order, we have either $y<z$ or $z<y$. If $y<z$ then also $x\land y\land x<x\land z\land x$. (By regularity (\ref{eq:reg1}), $x\land y\land x\land x\land z\land x=x\land y\land z\land x=x\land y\land x$.) It follows that $\alpha=x\land z\land x=\beta$. Similarly, if $z<y$ then $\alpha=x\land y\land x=\beta$.
	\end{itemize}
\end{proof}

%{\color{red} As a corollary, we have that the skew lattice constructed in Proposition \ref{prop:construction} is a left, right and weak distributive solution of the Yang-Baxter equation.}
%{\color{blue} Yes, we have to strike this. Maybe give a comment about it later on.}

\section{Solutions obtained from general skew lattices}
In this section we use arbitrary skew lattices to produce new set-theoretic solutions of the Yang-Baxter equation. These solutions are of idempotent type, and thus of importance as is shown in \cite{L:2017}.
%In this section, we show that arbitrary skew lattices provide set-theoretic solutions of the Yang-Baxter equation using both operations of the skew lattice. Furthermore, we prove that these solutions are of idempotent type.

Given elements $x,y$ in a skew lattice $S$ we define the \emph{lower update} of $x$ by $y$ as
\[
x\lfloor y \rfloor=(y\wedge x\wedge y)\vee x\vee (y\wedge x\wedge y).
\]
Denote $A=\DD_x$, $B=\DD_y$ and $M=A\land B$. The following properties of $x\lfloor y \rfloor$ were proved in \cite{update}:
\begin{itemize}
	\item $x\lfloor y \rfloor\in A$,
	\item $x\lfloor y \rfloor$ is the unique element of the coset $M\lor x\lor M$ such that $y\land x\land y\leq x\lfloor y \rfloor$,
	\item $x\lfloor y \rfloor \land y=y\land x\land y=y\land x\lfloor y \rfloor$.
\end{itemize}

Moreover, if $S$ is left handed then $x\lfloor y \rfloor= x\vee (y\wedge x)$; and if $S$ is right handed then $x\lfloor y \rfloor=(x\wedge y)\vee x$.
In order to be able to prove  Lemma \ref{lemma:update} below, we need to recall some further facts and definitions from skew lattice theory in the following remark.

\begin{remark}\label{remark1}\normalfont
	The geometric structure of skew lattices was studied in \cite{L6}. Given comparable $\DD$-classes $A, B$ in a skew lattice $S$ such that $A>B$ holds in the lattice $S/\DD$, a \emph{coset of $A$ in $B$} is a subset $A\land b\land A=\{a\land b\land a'\,|\, a,a'\in A\}\subseteq B$, where $b\in B$. Likewise, a \emph{coset of $B$ in $A$} is a subset $B\lor a\lor B=\{b\lor a\lor b'\,|\, b,b'\in B\}\subseteq A$, where $a\in A$. 
	%By \cite{L6}, the cosets of $A$ in $B$ form a partition of $B$, and similarly, the cosets of $B$ in $A$ form a partition of $A$. By  \cite[Proposition 7]{Cv2011}, elements $a,a'$ of $A$ lie in a common coset of $B$ in $A$ if and only if $b\lor a\lor b=b\lor a'\lor b$ for all $b\in B$, which is further equivalent to $b\lor a\lor b=b\lor a'\lor b$ for some $b\in B$. Dually, elements $b,b'\in B$ lie in a common coset of $A$ in $B$ if and only if $a\land b\land a=a\land b'\land a$ for all $a\in A$, which is further equivalent to $a\land b\land a=a\land b'\land a$ for some $a\in A$.
	
	Moreover, given any coset $B_j$ of $A$ in $B$ and any coset $A_i$ of $B$ in $A$, there exists a bijection $\varphi_{ji}:A_i\to B_j$ which maps an element $x\in A_i$ to the unique element $y\in B_j$ with the property $y\leq x$ w.r.t. natural partial order. In order to prove that a pair of elements of $A$ are equal it thus suffices to show that they lie in the same coset of $B$ in $A$ \emph{and} are both above the same element of $B$ w.r.t. natural partial order. Likewise,  a pair of elements of $B$ are equal if and only if they lie in the same coset of $A$ in $B$ and they are both below the same element of $A$ w.r.t. natural partial order.
\end{remark} 
We will apply the technique from Remark \ref{remark1} in the proof of Lemma \ref{lemma:update} below.

\begin{lemma}\label{lemma:update}
	Let $S$ be a skew lattice and $x,y,z\in S$. Then,
	\begin{equation}\label{eq:update}
	(x\lfloor y \rfloor)\lfloor y\lfloor z \rfloor \rfloor=x\lfloor y \lfloor z\rfloor \rfloor.
	\end{equation}
\end{lemma}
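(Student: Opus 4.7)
The plan is to invoke the characterization given in Remark \ref{remark1}: since both $x\lfloor y\lfloor z\rfloor\rfloor$ and $(x\lfloor y\rfloor)\lfloor y\lfloor z\rfloor\rfloor$ lie in the $\DD$-class $A=\DD_x$, it will suffice to exhibit a lower $\DD$-class $M$ such that both sides belong to a single coset of $M$ in $A$ \emph{and} both sides lie above the same element of $M$ with respect to the natural partial order. Writing $B=\DD_y$ and $M=A\wedge B$, the fact that $y\lfloor z\rfloor\in B$ means this $M$ is the correct ``lower'' class for both outer applications of the lower update.

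For the coset, the second listed property of the lower update gives $x\lfloor y\rfloor\in M\vee x\vee M$ (since $y\wedge x\wedge y\in M$), and because distinct cosets of $M$ in $A$ are disjoint this forces $M\vee x\lfloor y\rfloor\vee M = M\vee x\vee M$. Both $x\lfloor y\lfloor z\rfloor\rfloor$ and $(x\lfloor y\rfloor)\lfloor y\lfloor z\rfloor\rfloor$ therefore belong to this single coset.

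The main step, and where I expect the real work, is to show that both sides lie above a common element of $M$. Setting $w=y\lfloor z\rfloor$, the natural lower bounds furnished by the defining property of the update are $w\wedge x\wedge w$ for $x\lfloor y\lfloor z\rfloor\rfloor$ and $w\wedge x\lfloor y\rfloor\wedge w$ for $(x\lfloor y\rfloor)\lfloor y\lfloor z\rfloor\rfloor$, and the crux is to prove that these two elements of $M$ coincide. The key observation is that $w\,\DD\,y$ gives $w\wedge y\wedge w=w$; substituting this in place of each outer $w$ and then applying the regularity identity \eqref{eq:reg1} twice (both times with $a=w$) collapses the resulting seven-term product to $w\wedge y\wedge x\wedge y\wedge w = w\wedge(y\wedge x\wedge y)\wedge w$. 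The identical manipulation with $x\lfloor y\rfloor$ in place of $x$ yields $w\wedge x\lfloor y\rfloor\wedge w = w\wedge(y\wedge x\lfloor y\rfloor\wedge y)\wedge w$, and the third listed property of the lower update, $y\wedge x\lfloor y\rfloor\wedge y = y\wedge x\wedge y$, then makes the two expressions literally the same element of $M$. Together with the coset observation, Remark \ref{remark1} forces the two sides of \eqref{eq:update} to coincide. Should the regularity manipulation prove more delicate than anticipated, Theorem \ref{thm:lrhanded} provides a safety net, reducing the identity to the left- and right-handed cases, where the explicit formulas $x\lfloor y\rfloor=x\vee(y\wedge x)$ and $x\lfloor y\rfloor=(x\wedge y)\vee x$ can be substituted directly.
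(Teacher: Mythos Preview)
Your proposal is correct and follows the same overall architecture as the paper: both sides lie in the coset $M\vee x\vee M$ of $M=A\wedge B$ in $A=\DD_x$, so by Remark~\ref{remark1} it suffices to show that $u=w\wedge x\lfloor y\rfloor\wedge w$ and $v=w\wedge x\wedge w$ (with $w=y\lfloor z\rfloor$) coincide. The only difference is in how $u=v$ is established. The paper invokes Remark~\ref{remark1} a second time, now inside $M$ with respect to cosets of $B$: it computes $y\wedge u\wedge y=y\wedge v\wedge y$ via regularity and the identity $y\wedge x\lfloor y\rfloor=y\wedge x\wedge y$, and then notes that both $u$ and $v$ lie below $w\in B$. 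Your route is a little more direct: from $w\,\DD\,y$ you have $w\wedge y\wedge w=w$, and two applications of regularity~\eqref{eq:reg1} give $w\wedge t\wedge w=w\wedge(y\wedge t\wedge y)\wedge w$ for $t\in\{x,\,x\lfloor y\rfloor\}$, so the same identity $y\wedge x\lfloor y\rfloor\wedge y=y\wedge x\wedge y$ yields $u=v$ outright, with no second layer of coset bookkeeping. The ingredients are identical; your argument simply packages them more economically.
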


\begin{proof}
	Denote $A=\DD_x$, $B=\DD_y$ and $M=A\land B$.   Consider first the element $(x\lfloor y \rfloor)\lfloor y\lfloor z \rfloor \rfloor$. Being an (multiple) update of $x$, it must lie in $A$. In fact, it is the unique element of the coset $M\lor x\lor M$ in $A$ that is above $y\lfloor z \rfloor \land x\lfloor y \rfloor \land y\lfloor z \rfloor $ w.r.t. natural partial order. Likewise, $x\lfloor y \lfloor z\rfloor \rfloor$ also lies in $A$, and it is the unique element of $M\lor x\lor M$ that is above $y\lfloor z\rfloor \land x\land y\lfloor z\rfloor$. In order to prove \eqref{eq:update} it suffices to show that $u=v$, where $u=y\lfloor z \rfloor \land x\lfloor y \rfloor \land y\lfloor z \rfloor$ and $v=y\lfloor z\rfloor \land x\land y\lfloor z\rfloor $. 
	
	Both $u$ and $v$ are elements of $M$. We claim that they lie in a common coset of $B$ in $M$, i.e. that  $B\land u \land B = B \land v \land B$ holds.
	The coset $B \land u \land B$ contains the element $y \land u \land y = y \land y\lfloor z\rfloor \land x\lfloor y\rfloor \land y\lfloor z\rfloor \land y$.
	Using regularity (\ref{eq:reg1}), this equals to $y \land y\lfloor z\rfloor \land y \land x\lfloor y\rfloor \land y \land y\lfloor z\rfloor \land y$.
	Using $x\lfloor y\rfloor \land y = y \land x\lfloor y\rfloor = y \land x \land y$, the above equals
	$y \land y\lfloor z\rfloor \land y \land x \land y \land y\lfloor z\rfloor \land y$, which by regularity (\ref{eq:reg1}) simplifies to $y \land y\lfloor z\rfloor \land x \land y\lfloor z\rfloor \land y = y \land v \land y$, which is  an element of the coset $B \land v \land B$. 
	It follows the cosets $B \land u \land B$ and $B \land v \land B$ intersect, and are thus equal. 
	
	Finally, observe that $u$ and $v$ both lie below $ y\lfloor z\rfloor$, i.e. $ y\lfloor z\rfloor\land u=u=u\land  y\lfloor z\rfloor$ and $ y\lfloor z\rfloor\land v=v=v\land  y\lfloor z\rfloor$. It follows that $u=v$.
\end{proof}

\begin{theorem}\label{th:update}
	Let $(S,\land,\lor)$ be a  skew lattice. Then the map defined by $r(x,y)=((x \land y) \lor x, y)$ is an idempotent set-theoretic solution of the Yang-Baxter equation.
\end{theorem}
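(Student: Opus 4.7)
The plan is to reduce to the two handedness cases by invoking Theorem~\ref{thm:lrhanded}: both $r^2=r$ and the Yang--Baxter identity, after expanding $r$, become equational statements in the operations $(\wedge,\vee)$, so it suffices to verify them when $S$ is left-handed and when $S$ is right-handed.

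When $S$ is left-handed, combining the left-handed identity $x\vee y\vee x=y\vee x$ with the absorption law $x\vee(x\wedge y)=x$ yields
\[
(x\wedge y)\vee x=x\vee(x\wedge y)\vee x=x\vee x=x,
\]
so $r$ collapses to the identity on $S\times S$, which is trivially an idempotent solution of the Yang--Baxter equation.

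When $S$ is right-handed, $(x\wedge y)\vee x$ coincides with the lower update $x\lfloor y\rfloor$, so $r(x,y)=(x\lfloor y\rfloor,y)$. The third bulleted property of $\lfloor\cdot\rfloor$ gives $x\lfloor y\rfloor\wedge y=y\wedge x\wedge y=x\wedge y$, the last equality by right-handedness, whence
\[
(x\lfloor y\rfloor)\lfloor y\rfloor=(x\wedge y)\vee x\lfloor y\rfloor=(x\wedge y)\vee(x\wedge y)\vee x=x\lfloor y\rfloor,
\]
establishing $r^2=r$. For the Yang--Baxter equation I would then unroll both compositions coordinate by coordinate. Writing $p(u,v):=u\lfloor v\rfloor$, the third coordinate is $z$ on both sides; the middle coordinate is $p(y,z)$ directly on one side and $p(p(y,z),z)$ on the other, and the two agree by the idempotence just established; the first coordinate is $p(p(x,y),p(y,z))$ on one side and $p(x,p(y,z))$ on the other, and Lemma~\ref{lemma:update} is precisely the identity asserting that these are equal.

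The main obstacle is spotting this left/right dichotomy. A direct attack on $(x\wedge y)\vee x$ in an arbitrary skew lattice forces one to wrestle with terms like $((x\wedge y)\vee x)\wedge((y\wedge z)\vee y)$, but once the reduction via Theorem~\ref{thm:lrhanded} is made, the left-handed case evaporates and Lemma~\ref{lemma:update} delivers exactly the commutation between nested updates that the Yang--Baxter equation requires.
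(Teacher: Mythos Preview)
Your proposal is correct and follows essentially the same strategy as the paper: reduce via Theorem~\ref{thm:lrhanded} to the left- and right-handed cases, observe that in the left-handed case $(x\wedge y)\vee x=x$ so $r$ is the identity, and in the right-handed case identify $r$ with the lower-update map and invoke Lemma~\ref{lemma:update} for the first-coordinate identity and the idempotence of $u\mapsto u\lfloor z\rfloor$ for the middle coordinate. Your direct computation of $(x\lfloor y\rfloor)\lfloor y\rfloor=x\lfloor y\rfloor$ via the third bulleted property is a mild simplification of the paper's coset-uniqueness argument for the same fact, but the overall architecture is the same.
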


\begin{proof}
	Let $x,y,z \in S$. Then
	\begin{align*}
	&(r\times \id) (\id\times r)(r\times \id) (x, y,z) \\&=(r\times \id) (\id\times r)((x\land y) \lor x, y, z) 
	\\ &= (r\times \id)((x\land y) \lor x, (y\land z) \lor y, z)
	\\ &= ((((x\land y) \lor x) \land ((y\land z) \lor y)) \lor ((x\land y) \lor x), (y\land z) \lor y, z),
	\end{align*}
	and 
	\begin{align*}
	&(\id\times r)(r\times \id)(\id\times r) (x, y,z) \\&=(\id\times r)(r\times \id)(x, (y\land z) \lor y, z)
	\\ &= (\id\times r)((x \land ((y\land z) \lor y)) \lor x,(y\land z) \lor y, z)
	\\ &= ((x \land ((y\land z) \lor y)) \lor x,(((y\land z) \lor y) \land z) \lor ((y\land z) \lor y), z).
	\end{align*}
	
	We need to prove
	\begin{equation}\label{eq:sol-lower-update1}
	(((x\land y) \lor x) \land ((y\land z) \lor y)) \lor ((x\land y) \lor x)=(x \land ((y\land z) \lor y)) \lor x,
	\end{equation}
	and
	\begin{equation}\label{eq:sol-lower-update2}
	(y\land z) \lor y=(((y\land z) \lor y) \land z) \lor ((y\land z) \lor y).
	\end{equation}
	
	First assume that $S$ is left handed. Then, using absorption, \eqref{eq:sol-lower-update1} simplifies to $(x\land y)\lor x=(x\land y)\lor x$ (and even  to $x=x$). Similarly, \eqref{eq:sol-lower-update2} simplifies to $y=y$.
	
	Assume next that $S$ is right handed. Denote $A=\DD_x$, $B=\DD_y$ and $M=A\land B$.  Notice that \eqref{eq:sol-lower-update1} simplifies to \eqref{eq:update}, which holds by Lemma \ref{lemma:update}.
	
	If $S$ is right handed then  \eqref{eq:sol-lower-update2} simplifies to $y\lfloor z\rfloor = (y\lfloor z\rfloor\land z)\lor ((y\land z)\lor y)$. Notice that $ (y\lfloor z\rfloor\land z)\lor ((y\land z)\lor y)$ simplifies to $(z\land y\land z) \lor ((y\land z)\lor y)$. Using right handedness  the latter further simplifies to $(y\land z) \lor ((y\land z)\lor y)=(y\land z)\lor y=y\lfloor z\rfloor.$
	
	We proved that all left handed as well as all right handed skew lattices satisfy  identities \eqref{eq:sol-lower-update1} and \eqref{eq:sol-lower-update2}. By Theorem \ref{thm:lrhanded}, all skew lattices satisfy these identities.
	
	It remains to prove that the solution is idempotent. Again, by Theorem \ref{thm:lrhanded}, it is enough to prove this separately for left and right handed skew lattices. 
	
	Let $S$ be left handed. Then $(x\land y)\lor x=(x\land y\land x)\lor x$, which simplifies to $x$ by absorption  Thus: $r(x,y)=(x,y)=r^2(x,y).$
	
	If $S$ is right handed, then $r(x,y)=(x\lfloor y\rfloor ,y)$ and $r^2(x,y)=((x\lfloor y\rfloor)\lfloor y\rfloor, y)$. However, $(x\lfloor y\rfloor)\lfloor y\rfloor$ equals $x\lfloor y\rfloor$ as they both represent the unique element in the coset $M\lor x\lor M$ that is above $y\land x\land y$ w.r.t natural partial order.
\end{proof}

\begin{corollary}\label{cor:update}
	Let $(S, \land, \lor)$ be a skew lattice. The map $r(x,y)=(x\lfloor y\rfloor, y)$ is an idempotent set-theoretic solution of the Yang-Baxter equation.
\end{corollary}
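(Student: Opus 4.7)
The plan is to verify the Yang-Baxter equation and idempotence directly from the three bulleted properties of $x\lfloor y\rfloor$ listed before Theorem~\ref{th:update}, together with Lemma~\ref{lemma:update}, rather than going through the left-/right-handed split of Theorem~\ref{thm:lrhanded}. Since $r(x,y)=(x\lfloor y\rfloor,y)$ leaves the second argument untouched, the iterated applications are transparent. A direct computation would give
\begin{align*}
(r\times\id)(\id\times r)(r\times\id)(x,y,z) &= \bigl((x\lfloor y\rfloor)\lfloor y\lfloor z\rfloor\rfloor,\; y\lfloor z\rfloor,\; z\bigr),\\
(\id\times r)(r\times\id)(\id\times r)(x,y,z) &= \bigl(x\lfloor y\lfloor z\rfloor\rfloor,\; (y\lfloor z\rfloor)\lfloor z\rfloor,\; z\bigr),
\end{align*}
so the Yang-Baxter equation reduces to two identities: equality of the first coordinates is precisely Lemma~\ref{lemma:update}, while equality of the middle coordinates amounts to the update-idempotence identity $(y\lfloor z\rfloor)\lfloor z\rfloor = y\lfloor z\rfloor$.

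To dispatch this latter identity, I would set $M=\DD_y\land\DD_z$ and invoke the uniqueness clause of the coset characterization of the update. The third bulleted property gives $y\lfloor z\rfloor\land z = z\land y\land z$, whence $z\land y\lfloor z\rfloor\land z = z\land y\land z$; moreover $y\lfloor z\rfloor$ lies in $M\lor y\lor M$, and this coset coincides with $M\lor y\lfloor z\rfloor\lor M$ since cosets partition the $\DD$-class. Both $y\lfloor z\rfloor$ and $(y\lfloor z\rfloor)\lfloor z\rfloor$ thus satisfy the defining properties singling out the update of $y\lfloor z\rfloor$ by $z$, so they must agree.

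Finally, for the idempotence $r^2=r$, I would expand $r^2(x,y)=((x\lfloor y\rfloor)\lfloor y\rfloor, y)$ and apply the same uniqueness argument: both $x\lfloor y\rfloor$ and $(x\lfloor y\rfloor)\lfloor y\rfloor$ are the unique element of the coset $(\DD_x\land\DD_y)\lor x\lor(\DD_x\land\DD_y)$ lying above $y\land x\land y$, using once more that $y\land x\lfloor y\rfloor\land y=y\land x\land y$. The genuine work has already been done in Lemma~\ref{lemma:update}; everything else is bookkeeping with the coset-uniqueness characterization of the update, so I do not expect any substantive obstacle beyond that lemma.
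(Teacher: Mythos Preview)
Your proposal is correct and follows essentially the same route as the paper's own proof: both reduce the first-coordinate equality to Lemma~\ref{lemma:update} and handle the second-coordinate equality $(y\lfloor z\rfloor)\lfloor z\rfloor = y\lfloor z\rfloor$ via the coset-uniqueness characterization of the update, noting that both elements lie in the common coset $M\lor y\lor M$ above $z\land y\land z$. You spell out slightly more detail (why the cosets $M\lor y\lor M$ and $M\lor y\lfloor z\rfloor\lor M$ coincide, and an explicit treatment of idempotence, which in the paper is tacitly subsumed by the second-coordinate identity), but the approach is the same; in particular, the paper's proof of the corollary already avoids the left-/right-handed split, so your stated contrast with Theorem~\ref{thm:lrhanded} applies to Theorem~\ref{th:update} rather than to the corollary itself.
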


\begin{proof}
	We obtain 
	\[(r\times \id) (\id\times r)(r\times \id) (x, y,z)=((x\lfloor y \rfloor)\lfloor y\lfloor z\rfloor\rfloor, y\lfloor z\rfloor ,z),
	\]
	and
	\[ (\id\times r)(r\times \id) (\id\times r)(x, y,z)=(x\lfloor y \lfloor z\rfloor\rfloor, (y\lfloor z\rfloor)\lfloor z\rfloor ,z).
	\]
	The equality of first components follows by Lemma \ref{lemma:update}. The equality of the second components holds because $y\lfloor z\rfloor$ and $(y\lfloor z\rfloor) \lfloor z\rfloor$ are both elements of the coset $M\lor y\lor M$ (where $M=\DD_{y\land z}$), that lie above $z\land y\land z$. Since such elements are unique, it follows that $y\lfloor z\rfloor=(y\lfloor z\rfloor) \lfloor z\rfloor$.
\end{proof}

Dually to the notion of a lower update, the notion of an \emph{upper update operation} was defined in \cite{update}. Using the upper update we obtain analogue results to Lemma \ref{lemma:update}, Theorem \ref{th:update} and Corollary \ref{cor:update}, yielding additional solutions.

\section{Strong distributive solutions of the Yang-Baxter equation}

In the previous section, we provided an idempotent set-theoretic solution of the Yang-Baxter equation, using an arbitrary skew lattice. In this section, we study another map $r$, defined by $r(x,y)=(x \land y, x \lor y)$, for all $x,y$ elements of a given skew lattice $(S, \land, \lor)$. 

The inspiration for the definition of this map $r$ comes from the following well-known result. To each lattice $(L,\land,\lor)$ one may associate an idempotent map $r:L\times L \rightarrow L\times L$ defined as
\begin{equation}\label{eq:defR}
r(x,y)=(x \land y, x \lor y). 
\end{equation} Moreover, $(L,r)$ is a solution of the Yang-Baxter equation, i.e.
\begin{equation}\label{eq:yb}
(r \times \text{id}) \circ (\text{id} \times r) \circ (r \times \text{id}) = (\text{id} \times r) \circ (r \times \text{id}) \circ (\text{id} \times r),
\end{equation}
if and only if $L$ is a distributive lattice, meaning that for any $x,y,z \in L$, $x\land(y\lor z) =(x\land y)\lor(x\land z)$ and $x\lor(y\land z) =(x\lor y)\land(x\lor z)$.

We say that a skew lattice $S$ is a \emph{strong distributive solution} of the Yang-Baxter equation, if the  map $r:S \times S \rightarrow S \times S$ defined by
\eqref{eq:defR}
is a set-theoretic solution of the Yang-Baxter equation \eqref{eq:yb}. 
%Moreover, given a skew lattice $S$ and a map $\rho:S\times S\to S\times S$, we say that $(S,\rho)$ is a \emph{distributive solution} {\color{violet} (Do we use this?)} if it is a solution and the following diagram commutes (with $\pi:S\to S/\DD$ being the natural projection):
%
%\[
%{ \xymatrix{
%    S\times S \ar[d]_{\pi\times \pi}  \ar@{->}[r]^{\rho}
%    &
%  S\times S \ar[d]^{\pi\times \pi} 
%    \\
%    S/\DD \times S/\DD  \ar@{->}[r]_{r}
%    &
%    S/\DD \times S/\DD  
%  }}
%\]

%The following theorem proves that strong distributive solutions form a variety. A class of algebras is called a \emph{variety} if it is closed under homomorphic images, substructures and direct products. By Birkhoff's Theorem \cite[Theorem 11.9]{BS} a class of algebras is a variety if and only if it is equationally defined, i.e. if it is defined by a set of identities.

\begin{theorem}\label{th:str-sol-var}
	The class of skew lattices that form strong distributive solutions of the Yang-Baxter equation is a variety. Moreover, this variety is defined by the following identities
	\begin{eqnarray}
	\label{eq:identity-strong1} x\land y\land ((x\lor y)\land z) &=& x\land y\land z, \\ \label{eq:identity-strong2}
	(x\land y) \lor ((x\lor y)\land z) &=& (x\lor (y\land z))\land (y\lor z), \\ \label{eq:identity-strong3}
	x\lor y\lor z &=& x\lor (y\land z)\lor y\lor z.
	\end{eqnarray}
\end{theorem}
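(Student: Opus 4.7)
The plan is to compute both sides of the Yang-Baxter equation componentwise for the specific map $r(x,y)=(x\wedge y, x\vee y)$, and read off from the three resulting coordinate equalities exactly the three identities claimed. Once the strong distributive solution property is shown to be equivalent to a finite conjunction of identities, the variety assertion is immediate from Birkhoff's theorem.

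Concretely, I would first expand
\[
(r\times \id)\circ(\id\times r)\circ(r\times \id)(x,y,z) = \bigl((x\wedge y)\wedge((x\vee y)\wedge z),\; (x\wedge y)\vee((x\vee y)\wedge z),\; (x\vee y)\vee z\bigr),
\]
and
\[
(\id\times r)\circ(r\times \id)\circ(\id\times r)(x,y,z) = \bigl(x\wedge(y\wedge z),\; (x\vee(y\wedge z))\wedge(y\vee z),\; (x\vee(y\wedge z))\vee(y\vee z)\bigr).
\]
Then $r$ satisfies \eqref{eq:yb} on $S$ if and only if the three pairs of components agree as functions of $x,y,z$.

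Next I would match components. For the first coordinates, associativity of $\wedge$ turns $x\wedge(y\wedge z)$ into $x\wedge y\wedge z$ and $(x\wedge y)\wedge((x\vee y)\wedge z)$ into $x\wedge y\wedge((x\vee y)\wedge z)$, so their equality is precisely \eqref{eq:identity-strong1}. For the third coordinates, associativity of $\vee$ turns $(x\vee y)\vee z$ into $x\vee y\vee z$ and $(x\vee(y\wedge z))\vee(y\vee z)$ into $x\vee(y\wedge z)\vee y\vee z$, giving exactly \eqref{eq:identity-strong3}. The middle coordinates yield \eqref{eq:identity-strong2} verbatim. Thus the Yang-Baxter condition for this $r$ is logically equivalent to the conjunction of \eqref{eq:identity-strong1}, \eqref{eq:identity-strong2}, \eqref{eq:identity-strong3}.

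Finally, since the class of skew lattices forming strong distributive solutions coincides with the class of models of these three identities (together with the identities axiomatizing skew lattices), Birkhoff's Theorem \cite[Theorem 11.9]{BS} gives that this class is a variety. There is no genuine obstacle here beyond careful bookkeeping; the only thing worth checking is that no hidden simplification collapses the three identities into fewer, which a quick inspection confirms (e.g.\ a distributive lattice already satisfies all three, so none is vacuous in the skew setting).
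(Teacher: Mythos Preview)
Your proposal is correct and follows essentially the same approach as the paper: compute both sides of the Yang--Baxter equation for $r(x,y)=(x\wedge y,x\vee y)$, match the three components, and observe that the resulting conditions are precisely the three stated identities. Your final remark about the identities not collapsing is unnecessary (the theorem does not claim independence), and the justification you give for it is not quite coherent, but this does not affect the validity of the proof itself.
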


\begin{proof}
	Let $x,y,z \in S$. Then
	\begin{align*}
	&(r\times \id) (\id\times r)(r\times \id) (x, y,z) \\&=(r\times \id) (\id\times r)(x\land y,x\lor y, z) 
	\\ &= (r\times \id)(x \land y, (x \lor y) \land z, (x \lor y) \lor z)
	\\ &= ((x \land y)\land ((x \lor y) \land z), (x \land y)\lor ((x \lor y) \land z), (x \lor y) \lor z),
	\end{align*}
	and
	\begin{align*}
	&(\id\times r)(r\times \id)(\id\times r) (x, y,z) \\&=(\id\times r)(r\times \id)(x, y\land z, y \lor z)
	\\ &= (\id\times r)(x \land (y\land z), x \lor (y\land z), y \lor z)
	\\ &= (x \land (y\land z), (x \lor (y\land z))\land (y \lor z),(x \lor (y\land z)) \lor (y \lor z)).
	\end{align*}
	Hence a skew lattice $S$ is a strong distributive solution of the Yang-Baxter equation if and only if it satisfies
	\begin{eqnarray*}
		(x \land y)\land ((x \lor y) \land z) &=& x \land (y\land z),\\
		(x \land y)\lor ((x \lor y) \land z) &=& (x \lor (y\land z))\land (y \lor z),\\
		(x \lor y) \lor z &=& (x \lor (y\land z)) \lor (y \lor z).
	\end{eqnarray*}
\end{proof}

In Section 2, we mentioned that there are several ways to generalize the notion of distributivity for lattices. The weakest of them is the notion of \emph{quasi-distributivity}\label{quasi-distributive} which is defined as having a distributive maximal lattice image. In particular it means that its maximal lattice image $S/\DD$ is a distributive lattice, \cite{L1}. If $S$ is a distributive skew lattice, then it is always {quasi-distributive}. A thorough study of distributivity in skew lattices can be found in \cite{dist}. 
For lattices, the notion of cancellation is equivalent to distributivity. As a consequence, cancellative skew lattices are always quasi-distributive.  
%{\color{blue} I would keep it. I think it is important for a reader who might be wondering how this is in the commutative case.}

\begin{corollary}
	Let $S$ be a skew lattice. If $S$ is a strong distributive solution of the Yang-Baxter equation \eqref{eq:yb}, then
	\begin{itemize}
		\item[(i)] The maximal lattice image $S/\DD$ is also a strong distributive solution of \eqref{eq:yb}.
		\item[(ii)] $S$ is quasi-distributive.
	\end{itemize}
\end{corollary}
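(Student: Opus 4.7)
The plan is to derive both parts from the equational characterization established in Theorem~\ref{th:str-sol-var} together with the fact that $\DD$ is a congruence on any skew lattice (Leech's First Decomposition Theorem, recalled in the preliminaries). Both arguments should be short, so no real obstacle is anticipated; the work is essentially bookkeeping.

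For part (i), I would first observe that Theorem~\ref{th:str-sol-var} shows that the class of strong distributive solutions is defined by the identities \eqref{eq:identity-strong1}, \eqref{eq:identity-strong2}, \eqref{eq:identity-strong3}, and is thus a variety. Varieties are, by Birkhoff's Theorem, closed under homomorphic images. Since $\DD$ is a congruence on $S$ and the canonical projection $S \to S/\DD$ is a skew-lattice homomorphism, the quotient $S/\DD$ satisfies the same three defining identities and is therefore itself a strong distributive solution of the Yang-Baxter equation.

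For part (ii), I would use that the maximal lattice image $S/\DD$ is an honest lattice, i.e.\ both $\land$ and $\lor$ are commutative on it. The classical result recalled at the opening of this section --- namely that for a lattice $L$ the map $r(x,y) = (x\land y, x\lor y)$ is a solution of \eqref{eq:yb} if and only if $L$ is distributive --- combined with part (i), forces $S/\DD$ to be a distributive lattice. By the definition of quasi-distributivity (distributive maximal lattice image), this is exactly the assertion that $S$ is quasi-distributive.

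The entire argument is a matter of invoking the variety structure from Theorem~\ref{th:str-sol-var} and the commutative-case equivalence between strong distributive solutions and distributive lattices; no further computation with the skew-lattice operations is required.
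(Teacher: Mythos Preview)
Your proposal is correct and follows essentially the same approach as the paper's own proof: both invoke Theorem~\ref{th:str-sol-var} to identify strong distributive solutions as a variety, use closure under homomorphic images via the projection $S\to S/\DD$ for part~(i), and then apply the classical lattice equivalence (strong distributive solution $\Leftrightarrow$ distributive) to obtain part~(ii). The only cosmetic difference is that you name Birkhoff's Theorem explicitly where the paper simply asserts closure under homomorphic images.
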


\begin{proof}
	(i) By Theorem \ref{th:str-sol-var} strong distributive solutions of \eqref{eq:yb} form a variety. Therefore all homomorphic images of strong distributive solutions are again strong distributive solutions. The maximal lattice image $S/\DD$ is the homomorphic image of $S$ under the natural projection $\pi:S\to S/\DD$ which maps each element to its $\DD$-class.
	
	(ii) By definition, $S$ is quasi-distributive if and only if $S/\DD$ is distributive. By (i), the lattice $S/\DD$ is a strong distributive solution.  But then $S/\DD$ must be a distributive lattice since lattices are distributive if and only if they are strong distributive solutions if and only if the map (\ref{eq:defR}) satisfies equation (\ref{eq:yb}). 
\end{proof}

In order to state and prove our next theorem, we need to introduce several varieties of skew lattices. 

A skew lattice is said to be \emph{normal}\label{normal} if it satisfies the identity
\[
x\land y\land z\land x = x\land z\land y\land x.
\]
It is said to be \emph{conormal}\label{conormal} if it satisfies the identity
\[
x\lor y\lor z\lor x = x\lor z\lor y\lor x.
\]
It is an easy exercise to prove that the condition of normality (resp. conormality) is equivalent to  $x \land y \land z \land w = x \land z \land y \land w$ (resp. $x \lor y \lor z \lor w = x \lor z \lor y \lor w$).
A skew lattice is called \emph{binormal} if it is both normal and conormal. By a result of Schein \cite{Schein}, a binormal skew lattice factors as a direct product of a lattice with a rectangular algebra. 

%In what follows, we need the notions of a strongly distributive and co-strongly distributive skew lattice.
A skew lattice is said to be \emph{strongly distributive}\label{strongly distributive} if it satisfies the identities
\[
(x\lor y)\land z = (x\land z)\lor (y\land z) \text{ and } x\land (y\lor z)=(x\land y)\lor (x\land z).
\]

A skew lattice is said to be \emph{co-strongly distributive}\label{co-strongly distributive} if it satisfies the identities
\[
(x\land y)\lor z = (x\lor z)\land (y\lor z) \text{ and } x\lor (y\land z)=(x\lor y)\land (x\lor z).
\]

By a result of Leech \cite{L4}, a skew lattice that is either strongly distributive or co-strongly distributive is distributive. Leech \cite{L4} proved that a skew lattice $S$ is strongly distributive if and only if it is symmetric, quasi-distributive and normal. Dually, $S$ is co-strongly distributive if and only if it is symmetric, quasi-distributive and conormal.
Moreover, by a result in \cite{dist}, cancellation is implied either by strong distributivity or co-strong distributivity.
%{\color{blue} Again, I would keep it. I think it is important for the reader who is trying to understand all the varieties we talk about.}

\begin{theorem}\label{th:solution-strong}
	Let $(S,\land,\lor)$ be a skew lattice which is both strongly and co-strongly distributive. Then $S$ is a strong distributive solution of the Yang-Baxter equation. Furthermore, this solution is cubic, i.e. $r^3=r$.
\end{theorem}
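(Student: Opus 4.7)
The plan is to combine Theorem \ref{th:str-sol-var}, which reduces the Yang--Baxter condition to the three identities \eqref{eq:identity-strong1}--\eqref{eq:identity-strong3}, with Schein's structure theorem to split the verification into two very manageable pieces. By hypothesis $S$ is strongly and co-strongly distributive, so by Leech's characterization it is symmetric, quasi-distributive, normal and conormal; in particular $S$ is binormal. Schein's theorem therefore gives $S \cong L \times R$, where $L$ is a lattice and $R$ is a rectangular skew lattice, and quasi-distributivity forces $L \cong S/\DD$ to be distributive. Under the natural identification $S \times S \cong (L \times L) \times (R \times R)$, the map $r$ on $S$ factors as $r_L \times r_R$, so both the identities \eqref{eq:identity-strong1}--\eqref{eq:identity-strong3} and the relation $r^3 = r$ can be checked componentwise.

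In the distributive lattice $L$ the introduction already recalls that $r_L(x,y)=(x\land y, x\lor y)$ is an idempotent solution of the Yang--Baxter equation; by the lattice absorption laws $(x\land y)\land (x\lor y)=x\land y$ and $(x\land y)\lor (x\lor y)=x\lor y$, so $r_L^{2}=r_L$ and in particular $r_L^{3}=r_L$.

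In the rectangular component $R$ the defining identities $x\land y\land z=x\land z$ and $x\lor y=y\land x$ together with their four-variable consequence $a\land b\land c\land d=a\land d$ collapse each side of \eqref{eq:identity-strong1}--\eqref{eq:identity-strong3} to a short word in $x,y,z$; for instance, using $x\lor y=y\land x$, one checks that both sides of \eqref{eq:identity-strong2} reduce to $y$, both sides of \eqref{eq:identity-strong1} to $x\land z$, and both sides of \eqref{eq:identity-strong3} to $z\land x$. For cubicity in $R$, note $r_R(x,y)=(x\land y,\,y\land x)$, and by the four-element reduction $(x\land y)\land (y\land x)=x\land x=x$ and $(y\land x)\land (x\land y)=y\land y=y$, so $r_R^{2}=\id_{R\times R}$ and hence $r_R^{3}=r_R$. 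Combining the two components, $r^3=r$ on $S$.

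The main obstacle, should one wish to avoid Schein's theorem, would be identity \eqref{eq:identity-strong2}: after applying strong distributivity to the left-hand side and co-strong distributivity to the right-hand side one is left with expressions containing extra mixed terms like $y\land x\land y$, and making them cancel requires a handedness split via Theorem \ref{thm:lrhanded} together with repeated use of conormality and absorption. The Schein decomposition bypasses this entirely by localizing the only nontrivial computations to a rectangular skew lattice, where the algebra is essentially trivial.
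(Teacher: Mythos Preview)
Your proof is correct and takes a genuinely different route from the paper. The paper verifies identities \eqref{eq:identity-strong1}--\eqref{eq:identity-strong3} and the relation $r^3=r$ by direct elementwise manipulation: it uses strong distributivity together with normality to collapse \eqref{eq:identity-strong1}, co-strong distributivity together with conormality to collapse \eqref{eq:identity-strong3}, expands both sides of \eqref{eq:identity-strong2} via strong distributivity to the common form $(x\land y)\lor(x\land z)\lor(y\land z)$ (with a conormality step to absorb the stray $y\land z\land y$ term on the right), and then checks $r^3=r$ by two further normality/conormality computations. You instead invoke Schein's decomposition $S\cong L\times R$ with $L$ a distributive lattice and $R$ rectangular, observe that the variety of strong distributive solutions is closed under products, and reduce everything to the two easy endpoints: the distributive-lattice case (already recorded in the introduction, with $r_L$ idempotent) and the rectangular case, where the identity $a\land b\land c=a\land c$ makes all three identities and the involutivity of $r_R$ trivial to verify.

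What each approach buys: the paper's argument is self-contained and shows exactly which fragments of the hypotheses are used at each step, which is informative given the later discussion of whether strong or co-strong distributivity can be dropped. Your argument is shorter and more conceptual, and it yields the extra structural information that on the rectangular factor $r$ is actually involutive while on the lattice factor it is idempotent, so the cubic behaviour of $r$ on $S$ is explained as the product of an involution with an idempotent. The cost is the dependence on Schein's theorem, which the paper only cites in passing; your closing paragraph correctly anticipates that a Schein-free proof of \eqref{eq:identity-strong2} needs more work, though the paper manages it without a handedness split.
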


\begin{proof}
	By Theorem \ref{th:str-sol-var} we need to prove that $S$ satisfies the identities (\ref{eq:identity-strong1})--(\ref{eq:identity-strong3}).
	Let $x,y,z \in S$. 
	
	Recall that strong distributivity implies normality, and co-strong distributivity implies conormality.
	Using strong distributivity and normality, we deduce
	\begin{align*}
	&(x \land y)\land ((x \lor y) \land z) 
	\\ &= ((x \land y \land x) \lor (x \land y)) \land z
	\\ &= (x \land y \land x \land z) \lor (x \land y \land z)
	\\ &= (x \land x \land y \land z) \lor (x \land y \land z)
	\\ &= (x \land y \land z) \lor (x \land y \land z)
	\\ &= x \land (y \land z).
	\end{align*}
	
	Using co-strongly distributivity and conormality, we obtain
	\begin{align*}
	&(x \lor (y\land z)) \lor (y \lor z) 
	\\ &= x \lor ((y \lor z) \land (z \lor y \lor z))
	\\ &= (x \lor y \lor z) \land (x \lor z \lor y \lor z)
	\\ &= (x \lor y \lor z) \land (x \lor y \lor z \lor z)
	\\ &= (x \lor y \lor z) \land (x \lor y \lor z)
	\\ &= (x \lor y) \lor z.
	\end{align*}
	Thus, we are left to prove that
	\begin{equation}\label{eq1}
	(x \land y)\lor ((x \lor y) \land z) = (x \lor (y\land z))\land (y \lor z).
	\end{equation}
	The left hand side of this equation is equal to
	\begin{align*}
	(x \land y)\lor ((x \lor y) \land z)
	= (x \land y) \lor (x \land z) \lor (y \land z),
	\end{align*}
	where we used that the skew lattice is strongly distributive.
	The right hand side of equation (\ref{eq1}) can be rewritten as follows, using strong distributivity, conormality and the absorption rules, 
	\begin{align*}
	&(x \lor (y\land z))\land (y \lor z)
	\\ &= (x \land (y \lor z)) \lor ((y \land z) \land (y \lor z))
	\\ &= (x \land y) \lor (x \land z) \lor (y \land z \land y) \lor (y \land z)
	\\ &= (x \land y) \lor (x \land z) \lor (y \land z \land y) \lor (y \land z) \lor (y \land z)
	\\ &= (x \land y) \lor (x \land z) \lor (y \land z) \lor (y \land z \land y) \lor (y \land z)
	\\ &= (x \land y) \lor (x \land z) \lor (y \land z) \lor (y \land z)
	\\ &= (x \land y) \lor (x \land z) \lor (y \land z).
	\end{align*}
	
	Let $x,y \in S$. Then,
	\begin{align*}
	r^3(x,y) 
	= &\; r^2(x \land y, x \lor y)
	\\ = &\; r((x \land y) \land (x \lor y), (x \land y) \lor (x \lor y))
	\\ = &\; (((x \land y) \land (x \lor y)) \land ((x \land y) \lor (x \lor y)), \\ &\; ((x \land y) \land (x \lor y)) \lor ((x \land y) \lor (x \lor y))).
	\end{align*}
	Using normality and the absorption rule, we deduce
	\begin{align*}
	&((x \land y) \land (x \lor y)) \land ((x \land y) \lor (x \lor y)) 
	\\ &= (x \land y) \land (x \lor y) \land (x \land y) \land ((x \land y) \lor (x \lor y)) 
	\\ &= (x \land y) \land (x \lor y) \land (x \land y)
	\\ &= x \land y \land x \land (x \lor y) \land x \land y
	\\ &= x \land y \land x \land x \land y
	\\ &= x \land y.
	\end{align*}
	Similarly, using conormality and the absorption rule, 
	\begin{align*}
	&((x \land y) \land (x \lor y)) \lor ((x \land y) \lor (x \lor y)) 
	\\&= ((x \land y) \land (x \lor y)) \lor (x \lor y)  \lor (x \land y) \lor (x \lor y) 
	\\&= (x \lor y)  \lor (x \land y) \lor (x \lor y)
	\\&= x \lor y \lor x \lor (x \land y) \lor x \lor y
	\\&= x \lor y \lor x \lor x \lor y
	\\&= x \lor y.
	\end{align*}
	Hence, $r^3(x,y) = (x\land y, x \lor y) = r(x,y)$.
\end{proof}

In general, we cannot omit either strong distributivity or co-strong distributivity from the assumptions of Theorem \ref{th:solution-strong}, as is verified by the following pair of examples.

\begin{example}\label{ex:3R0} \normalfont 
	Let $\mathbf 3^{R,0}$ be a $3$-element skew lattice given by the following pair of Cayley tables:
	\[
	\begin{tabular}{r|rrr}
	$\land$ & 0 & 1 & 2\\
	\hline
	0 & 0 & 0 & 0 \\
	1 & 0 & 1 & 2 \\
	2 & 0 & 1 & 2
	\end{tabular} \hspace{.5cm}
	\begin{tabular}{r|rrr}
	$\lor $ & 0 & 1 & 2\\
	\hline
	0 & 0 & 1 & 2 \\
	1 & 1 & 1 & 1 \\
	2 & 2 & 2 & 2
	\end{tabular}
	\]
	
	It is easy to check that $\mathbf 3^{R,0}$ is a right handed skew lattice with two comparable $\DD$-classes $\{1,2\}>\{0\}$, and it is strongly distributive (but not co-strongly distributive) by a result of Leech \cite{L4}. We claim that $\mathbf 3^{R,0}$ is not a strong distributive solution, more specifically, it does not satisfy the identity \eqref{eq:identity-strong2}. Take $x=0$, $y=1$ and $z=2$. Then $(x\land y)\lor ((x\lor y)\land z)=(0\land 1)\lor ((0 \lor 1)\land 2)=0\lor (1\land 2)=0\lor 2=2$, while $(x\lor (y\land z))\land (y\lor z)=(0\lor (1\land 2))\land (1\lor 2)=(0\lor 2)\land 1=2\land 1=1$.
\end{example}

\begin{example}\label{ex:3R1} \normalfont 
	Let $\mathbf 3^{R,1}$ be a $3$-element skew lattice given by the following pair of Cayley tables:
	\[
	\begin{tabular}{r|rrr}
	$\land$ & 0 & 1 & 2\\
	\hline
	0 & 0 & 0 & 2 \\
	1 & 0 & 1 & 2 \\
	2 & 0 & 2 & 2
	\end{tabular} \hspace{.5cm}
	\begin{tabular}{r|rrr}
	$\lor$ & 0 & 1 & 2\\
	\hline
	0 & 0 & 1 & 0 \\
	1 & 1 & 1 & 1 \\
	2 & 2 & 1 & 2
	\end{tabular}
	\]
	Similar argumentation as in Example \ref{ex:3R0} shows that $\mathbf 3^{R,1}$ is a co-strongly distributive  (but not strongly distributive) skew lattice, which is not a strong distributive solution.
\end{example}

More can be said in the case of left handed skew lattices.

\begin{proposition}
	Let $S$ be a left handed skew lattice. Then $S$ satisfies the identities \eqref{eq:identity-strong1} and \eqref{eq:identity-strong3}. If  in addition to being left handed, $S$ is  either strongly distributive or co-strongly distributive, then it also satisfies \eqref{eq:identity-strong2} and is thus a strong distributive solution.
\end{proposition}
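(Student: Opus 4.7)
The plan is to invoke Theorem \ref{th:str-sol-var} and verify the three identities \eqref{eq:identity-strong1}, \eqref{eq:identity-strong2} and \eqref{eq:identity-strong3} directly under the stated hypotheses. Throughout I will use left handedness in the forms $x\land y\land x = x\land y$ and $x\lor y\lor x = y\lor x$, the absorption laws, and the duality $x\land y = x \iff x\lor y = y$.

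For \eqref{eq:identity-strong1}, I would first record the auxiliary identity $y\land (x\lor y) = y$, valid in any left handed skew lattice: by the duality this is equivalent to $y\lor x\lor y = x\lor y$, which is left handedness itself. Substituting into the left hand side of \eqref{eq:identity-strong1} collapses it to $x\land y\land z$. For \eqref{eq:identity-strong3}, the analogous auxiliary is $(y\land z)\lor y = y$; by duality this reduces to $(y\land z)\land y = y\land z$, which is left handedness applied to $y\land z\land y$. Absorbing $(y\land z)$ into $y$ in the right hand side of \eqref{eq:identity-strong3} reduces it to $x\lor y\lor z$.

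The identity \eqref{eq:identity-strong2} is where the distributivity hypothesis is needed. Under strong distributivity, the strategy is to bring both sides to the symmetric form $(x\land y)\lor (x\land z)\lor (y\land z)$. The left hand side yields this immediately via $(x\lor y)\land z = (x\land z)\lor (y\land z)$. For the right hand side, strong distributivity on the outer meet together with the expansion of $x\land (y\lor z)$ produces $(x\land y)\lor (x\land z)\lor ((y\land z)\land (y\lor z))$, and a further application of strong distributivity combined with left handedness and idempotency shows $(y\land z)\land (y\lor z) = y\land z$.

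Under co-strong distributivity I would dualise, bringing both sides to the form $(x\lor y)\land (x\lor z)\land (y\lor z)$. The right hand side reduces immediately via $x\lor (y\land z) = (x\lor y)\land (x\lor z)$. For the left hand side, applying $(a\land b)\lor c = (a\lor c)\land (b\lor c)$ to the outer join and then expanding each factor by co-strong distributivity produces a fourfold meet; the required collapse comes from two uses of left handedness, namely $y\lor x\lor y = x\lor y$ inside one of the factors and the identity $(x\lor y)\land (x\lor z)\land (x\lor y) = (x\lor y)\land (x\lor z)$ on the resulting expression. The main obstacle is the bookkeeping in this last subcase; once the right order of expansions is chosen the arguments are otherwise routine.
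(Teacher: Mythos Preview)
Your proof is correct and follows essentially the same route as the paper: verifying \eqref{eq:identity-strong1}--\eqref{eq:identity-strong3} directly, reducing both sides of \eqref{eq:identity-strong2} to $(x\land y)\lor (x\land z)\lor (y\land z)$ in the strongly distributive case, and dualising for the co-strongly distributive case. Your handling of \eqref{eq:identity-strong1} and \eqref{eq:identity-strong3} via the duality $a\land b=a\iff a\lor b=b$ is a slight repackaging of the paper's insert-and-absorb argument, and you spell out the co-strongly distributive subcase that the paper leaves implicit.
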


\begin{proof}
	We first show that $S$ satisfies \eqref{eq:identity-strong1}. Using left handedness we obtain $(x\land y)\land ((x\lor y)\land z)=x\land y\land x\land (x\lor y)\land z$, which by absorption simplifies to $x\land y\land x\land z$, and then by left handedness further to $x\land y\land z$.
	
	Next we show that $S$ satisfies \eqref{eq:identity-strong3}. Using left handedness we obtain $x\lor (y\land z)\lor y\lor z=x\lor (y\land z)\lor z\lor y\lor z$. With the aid of absorption and left handedness, the latter first simplifies to $x\lor z\lor y\lor z$, and then to $x\lor y\lor z$.
	
	Assume now that $S$ is strongly distributive. (The case when $S$ is co-strongly distributive is handled in a dual fashion.) We claim that  $S$ satisfies \eqref{eq:identity-strong2}. We obtain $(x\land y)\lor ((x\lor y)\land z)=(x\land y)\lor (x\land z)\lor (y\land z)$, and $(x\lor (y\land z))\land (y\lor z)=(x\land (y\lor z))\lor (y\land z\land (y\lor z))$. Using strong distributivity and left handedness this expands to $(x\land y)\lor (x\land z)\lor (y\land z\land y\land (y\lor z))$. Using absorption and left handedness this simplifies to $(x\land y)\lor (x\land z)\lor (y\land z)$. 
\end{proof}

It turns out that not all left handed, distributive and cancellative skew lattices are  strong distributive solutions. The program \emph{Mace4} \cite{McCune} was able to find a $16$-element example of a left handed, distributive and cancellative skew lattice, that is not a strong distributive solution. 

The set-theoretic solution (\ref{eq:defR}) obtained from a strong distributive skew lattice is degenerate in general. Nevertheless, there are examples where the solution is non-degenerate.

\begin{example}\label{ex1} \normalfont 
	Let $S$ be a non-empty set and let the skew lattice operations be defined on $S$ by $x \land y = y$ and $x \lor y = x$, for all $x,y \in S$. Then, one can check that this is a strongly and co-strongly distributive skew lattice, and thus a strong distributive solution by Theorem \ref{th:solution-strong}. In fact, $x\,\RR\, y$ holds for all $x,y\in S$, and thus $(S, \land)$ is a \emph{right-zero semigroup} (i.e. it satisfies $x\land y=y$), which means that $S$ is a right handed skew lattice with  one $\DD$-class. The associated map (\ref{eq:defR}) is the twist map $r(x,y)=(y,x)$. This solution is non-degenerate as both $\lambda_x: X \rightarrow X: t \mapsto x \land t = t$ and $\rho_y: X \rightarrow X: t \mapsto t \lor y=t$ are bijective maps, for all $x,y \in S$.
\end{example}

In fact, the skew lattices given by Example \ref{ex1} above are the only strong distributive solutions that give non-degenerate set-theoretic solutions of the Yang-Baxter equation.

\begin{proposition}\label{prop:strong-non-deg}
	Let $(S,\land,\lor)$ be a skew lattice that is a strong distributive solution, where the associated solution is left or right non-degenerate. Then, $(S, \land, \lor)$ is the skew lattice from Example \ref{ex1}.
\end{proposition}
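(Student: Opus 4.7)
The plan is to observe that, regardless of the strong distributive solution hypothesis, either non-degeneracy assumption alone forces the operations of $S$ to take the trivial form of Example~\ref{ex1}.

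First I would show that for every $x \in S$ the self-map $\lambda_x : y \mapsto x \land y$ is an idempotent function on $S$: using associativity and the idempotence of $\land$ we compute $\lambda_x(\lambda_x(y)) = x \land (x \land y) = (x \land x) \land y = x \land y$, so $\lambda_x \circ \lambda_x = \lambda_x$. Dually, $\rho_y : x \mapsto x \lor y$ satisfies $\rho_y \circ \rho_y = \rho_y$. Since any bijective idempotent function on a set must equal the identity (compose $f \circ f = f$ on the right with $f^{-1}$), left non-degeneracy forces $\lambda_x = \id$ for every $x$, i.e.\ $x \land y = y$ for all $x, y \in S$; right non-degeneracy similarly forces $\rho_y = \id$ for every $y$, i.e.\ $x \lor y = x$ for all $x, y \in S$.

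Finally I would invoke the pair of dualities recalled at the beginning of Section~1, namely that $x \land y = y$ is equivalent to $x \lor y = x$: from $x \land y = y$ holding universally one immediately concludes $x \lor y = x$ holding universally, and the converse implication is identical. Either non-degeneracy assumption therefore yields precisely the meet and join of Example~\ref{ex1}, with a single $\DD$-class on which $(S,\land)$ is a right-zero semigroup and $(S,\lor)$ a left-zero semigroup.

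I expect no real obstacle here. The strong distributive solution hypothesis is not actually used, and no identity from Theorem~\ref{th:str-sol-var} needs to be invoked. The content of the proof is the elementary observation that $\lambda_x$ and $\rho_y$ are always idempotent self-maps on any skew lattice, so bijectivity collapses them to the identity; the rest is just an application of the Leech dualities.
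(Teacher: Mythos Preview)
Your proposal is correct and is essentially the paper's own argument: the paper also shows that left non-degeneracy forces $x\land y=y$ by using $x\land(x\land t)=x\land t$ (i.e.\ your idempotence of $\lambda_x$), then obtains $x\lor y=x$ via absorption (which is exactly the Leech duality you invoke), and treats the right non-degenerate case dually. Like you, the paper does not actually use the strong distributive solution hypothesis anywhere in the proof.
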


\begin{proof}
	Assume first that $S$ is a strong distributive solution and that the obtained solution from the map (\ref{eq:defR}) is left non-degenerate.
	Let $x,y\in S$ be arbitrary. We claim that $x \land y = y$ and $x \lor y = x$. By the assumption, the map $\lambda_x:t\mapsto x\land t$, is a bijection. So, there exists $t \in S$ such that $y = x \land t$, and thus $x \land y = x \land (x \land t)= x \land t=y$. Furthermore, using absorption, $x \lor y = x \lor (x \land y) = x$. Thus, we obtain a skew lattice as in Example \ref{ex1}.
	
	%Assume next that $S$ together with the map (\ref{eq:defR}) is a right non-degenerate solution, i.e. that the map $\rho_x:t\mapsto t\lor x$ is a bijection, for all $x \in S$. Let $x,y \in S$ be arbitrary. We claim that $x \land y = y$ and $x \lor y = x$. By the assumption, $\rho_y$ is bijective. So, there exists $t \in S$ such that $x = t \lor y$. This implies that $x \lor y = x$ and, by absorption, $x \land y = (x \lor y) \land y = y$. Hence, we obtain a skew lattice as in Example \ref{ex1}.”
	The right non-degenerate case is similar to the left non-degenerate case.
	%Assume first that $S$ is a strong distributive solution and that the obtained solution from the map (\ref{eq:defR}) is left non-degenerate.
	%Let $x\in S$ be arbitrary. By the assumption, the map $\lambda_x:t\mapsto x\land t$, is a bijection. Note that given any $t\in S$, $x\land t\preceq x$. By the assumption, any element $y\in S$ is of the form $y=x\land t$  for some $t\in S$. It follows that   $y\preceq x$, for all $y\in S$. However, since $x$ was arbitrary, we obtain  $x\,\DD\, y$,  for all $x,y\in S$.
	%For any $x,y \in S$, we claim that $x \land y = y$. As $\lambda_x$ is bijective, there exists $t \in S$ such that $y = x \land t$, and thus $x \land y = x \land (x \land t)= x \land t=y$.  As $x \land y = y \lor x$ for each $x, y \in \DD$, we have that $y= x \land y = y \lor x$. Thus, we obtain a skew lattice as in Example \ref{ex1}.
	
	%Assume next that $S$ together with the map (\ref{eq:defR}) is a right non-degenerate solution, i.e. that the map $\rho_x:t\mapsto t\lor x$ is a bijection. Given any $t\in S$ we obtain $x\preceq t\lor x$. By the assumption, any element $y\in S$ is of the form  $y=t\lor x$ for some $t\in S$. If follows that  $x\preceq y$. Again, by switching the roles of $x$ and $y$ we obtain $x\,\DD\, y$ for all $x,y\in S$.
	%Since $S$ has a unique $\DD$-class, it follows that it is a rectangular algebra, and thus it satisfies $x\lor y=y\land x$. Moreover, for any $x,y \in S$, as $\rho_y$ is bijective, there exists $t \in S$ such that $t \lor y = x$. Hence, $x = x\lor y=y\land x$ and the assertion follows.
\end{proof}

\section{More distributive solutions}\label{sec:sd}

In this section we show that to skew lattices one can naturally associate more idempotent solutions. To do so we need the following terminology.

\subsection{Left distributive solutions}
Let $S$ be a skew lattice. Consider the map $r_L:S\times S \rightarrow S \times S$ defined by
\begin{equation}\label{eq:defrl}
r_L(x,y)=(x\land y, y\lor x).
\end{equation}
We say that a skew lattice $S$ is a \emph{left distributive solution} of the Yang-Baxter equation, if $(S, r_L)$ is a set-theoretic solution of the Yang-Baxter equation \eqref{eq:yb}.

\begin{proposition}
	%\label{lemma:weak-idemp}
	Let $S$ be a skew lattice. If $S$ is a left distributive solution of the Yang-Baxter equation, then $(S,r_L)$ is an idempotent solution.
\end{proposition}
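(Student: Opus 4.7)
The plan is to show that $r_L^2 = r_L$ follows directly from the absorption laws of a skew lattice; the Yang-Baxter hypothesis is in fact not needed for this particular claim, and only guarantees that $(S, r_L)$ is a set-theoretic solution in the first place (so that the word ``solution'' in the conclusion is meaningful). What has to be verified is therefore just that $r_L^2(x,y) = r_L(x,y)$ for all $x, y \in S$.

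First I would expand $r_L^2(x,y) = r_L(x \land y,\, y \lor x) = ((x \land y) \land (y \lor x),\, (y \lor x) \lor (x \land y))$. The task then splits into simplifying each component.

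For the first component, I would use associativity of $\land$ to rewrite it as $x \land (y \land (y \lor x))$, and then invoke the absorption identity $y \land (y \lor x) = y$ (which is the instance $a \land (a \lor b) = a$ of the first absorption law) to collapse the expression to $x \land y$. Dually, for the second component I would use associativity of $\lor$ to group it as $y \lor (x \lor (x \land y))$, and then invoke the absorption identity $x \lor (x \land y) = x$ to reduce to $y \lor x$. Combining these gives $r_L^2(x,y) = (x \land y, y \lor x) = r_L(x,y)$, as required.

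There is no real obstacle: the argument uses only associativity and two applications of absorption, and in particular does not use any of the finer structure theory (Green's relations, left/right handedness, distributivity) that appears elsewhere in the paper. It is worth noting in passing that, unlike the analogous identity for the map $r(x,y) = (x \land y, x \lor y)$ studied in Section~4, this idempotency of $r_L$ holds unconditionally on every skew lattice.
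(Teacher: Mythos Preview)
Your proof is correct and is essentially identical to the paper's own argument: both compute $r_L^2(x,y) = ((x\land y)\land(y\lor x),\,(y\lor x)\lor(x\land y))$ and reduce each component to $(x\land y,\,y\lor x)$ using absorption. You simply make the associative regroupings and the specific absorption instances explicit, whereas the paper just writes ``using the absorption laws.''
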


\begin{proof}
	For any $x,y \in S$, using the absorption laws we obtain 
	\begin{align*}
	r_L^2(x,y) &= r_L(x\land y, y \lor x) \\ &= ((x\land y) \land (y \lor x), (y \lor x) \lor (x \land y)) \\ &=  (x\land y, y \lor x) \\&= r_L(x,y).
	\end{align*}
\end{proof}

\begin{theorem}\label{th:left-sol-var}
	The class of left distributive solutions of the Yang-Baxter equation is a variety. Moreover, this variety is defined by the identity
	\begin{equation}\label{eq:identity-left}
	((y\lor x)\land z)\lor (x\land y)=((y\land z)\lor x)\land (z\lor y).
	\end{equation}
\end{theorem}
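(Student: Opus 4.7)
The plan is to imitate the proof of Theorem \ref{th:str-sol-var}: apply both compositions in the Yang-Baxter equation to a generic triple $(x,y,z)$, simplify each of the three resulting coordinates, and read off the conditions that must hold on a skew lattice for $r_L$ to solve \eqref{eq:yb}. Since $r_L$ only uses $\land$ and $\lor$, the resulting conditions will be identities in the signature of skew lattices, so once we obtain them, the variety assertion will follow from Birkhoff's theorem exactly as in Theorem \ref{th:str-sol-var}.

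First I would compute
\begin{align*}
(r_L\times\id)(\id\times r_L)(r_L\times\id)(x,y,z)
 &=\lmb (x\land y)\land((y\lor x)\land z),\,((y\lor x)\land z)\lor(x\land y),\,z\lor(y\lor x)\rmb,\\
(\id\times r_L)(r_L\times\id)(\id\times r_L)(x,y,z)
 &=\lmb x\land(y\land z),\,((y\land z)\lor x)\land(z\lor y),\,(z\lor y)\lor((y\land z)\lor x)\rmb.
\end{align*}
So $r_L$ satisfies \eqref{eq:yb} if and only if the three coordinate-wise equalities hold as identities in $S$.

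Next I would verify that the first and third coordinates simplify to trivial identities on every skew lattice, reducing the content of \eqref{eq:yb} to the second coordinate. Indeed, using associativity of $\land$ and the absorption law $y\land(y\lor x)=y$, one gets
\[
(x\land y)\land((y\lor x)\land z)=x\land y\land(y\lor x)\land z=x\land y\land z=x\land(y\land z),
\]
so the first coordinates always agree. Dually, using associativity of $\lor$ and $y\lor(y\land z)=y$,
\[
(z\lor y)\lor((y\land z)\lor x)=z\lor y\lor(y\land z)\lor x=z\lor y\lor x=z\lor(y\lor x),
\]
so the third coordinates always agree too. What remains is exactly the identity
\[
((y\lor x)\land z)\lor(x\land y)=((y\land z)\lor x)\land(z\lor y),
\]
which is \eqref{eq:identity-left}.

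Finally, since \eqref{eq:yb} for $r_L$ has been reduced to a single identity in the operations $\land,\lor$, the class of left distributive solutions is equationally defined and hence a variety by Birkhoff's Theorem, completing the proof. The computation is routine; the only small point to watch is that both ``trivial'' coordinates really do collapse via absorption alone, without needing any form of commutativity or distributivity, which is what isolates the single identity \eqref{eq:identity-left} as the defining one.
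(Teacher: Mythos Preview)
Your proof is correct and follows essentially the same approach as the paper: compute both sides of \eqref{eq:yb} for $r_L$ on a generic triple, observe that the first and third coordinates collapse to $x\land y\land z$ and $z\lor y\lor x$ in any skew lattice via the absorption laws $y\land(y\lor x)=y$ and $y\lor(y\land z)=y$, and conclude that the Yang--Baxter condition is equivalent to the single identity \eqref{eq:identity-left}, whence the class is a variety by Birkhoff's theorem. The computations and simplifications match the paper's proof step for step.
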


\begin{proof}
	Denote $r=r_L$. A skew lattice $S$ is a left distributive solution of the Yang-Baxter equation if and only if it satisfies
	\begin{equation}\label{eq:ybe}
	(r\times \id) (\id\times r)(r\times \id) (x, y,z)= (\id\times r)(r\times \id) (\id\times r)(x, y,z).
	\end{equation}
	Computing the left side of \eqref{eq:ybe} yields
	\begin{align*}
	&(r\times \id) (\id\times r)(r\times \id) (x, y,z) \\ &=(r\times \id) (\id\times r)(x\land y,y\lor x, z) 
	\\ &= (r\times \id)(x\land y, (y\lor x)\land z, z\lor y\lor x) 
	\\ &= (x\land y\land (y\lor x)\land z), ((y\lor x)\land z)\lor (x\land y), z\lor y\lor x).
	\end{align*}
	On the other hand, the right side of \eqref{eq:ybe} expands as
	\begin{align*}
	&(\id\times r)(r\times \id) (\id\times r)(x, y,z)
	\\ &=(\id\times r)(r\times \id) (x, y\land z ,z\lor y) 
	\\ &=
	(\id\times r)(x\land y\land z, (y\land z)\lor x, z\lor y) 
	\\ &= (x\land y\land z, ((y\land z)\lor x)\land (z\lor y), z\lor y\lor (y\land z)\lor x).
	\end{align*}
	By absorption, $x\land y\land (y\lor x)\land z$ reduces to $x\land y\land z$. Similarly, $z\lor y\lor (y\land z)\lor x$ reduces to $z\lor y\lor x$. Hence, $S$ is a left distributive solution if and only it satisfies the identity $((y\lor x)\land z)\lor (x\land y)=((y\land z)\lor x)\land (z\lor y)$.
\end{proof}

\begin{corollary}\label{cor:maximal-lat-im-dist-sol}
	Let $S$ be a skew lattice. If $S$ is a left distributive solution of the Yang-Baxter equation, then
	the maximal lattice image $S/\DD$ is also a left distributive solution,
	and thus $S$ is quasi-distributive.
\end{corollary}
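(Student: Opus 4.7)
The plan is to mirror the argument given for the analogous corollary in the strong distributive case, exploiting the variety structure established in Theorem \ref{th:left-sol-var}. Since the class of left distributive solutions is a variety (and therefore closed under homomorphic images), and since the natural projection $\pi:S\to S/\DD$, $x\mapsto \DD_x$, is a surjective skew lattice homomorphism (this is Leech's First Decomposition Theorem), the image $S/\DD$ must again be a left distributive solution of the Yang-Baxter equation.

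The second step is the bridge between left distributive solutions and distributivity in the lattice case. Observe that $S/\DD$ is, by definition, the maximal lattice image of $S$, so both $\land$ and $\lor$ are commutative on $S/\DD$. Consequently, for any $x,y\in S/\DD$,
\[
r_L(x,y)=(x\land y,\, y\lor x)=(x\land y,\, x\lor y)=r(x,y),
\]
so the map $r_L$ of \eqref{eq:defrl} coincides with the map $r$ of \eqref{eq:defR}. Hence $(S/\DD, r)$ satisfies the Yang-Baxter equation.

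The conclusion then follows from the classical result for lattices recalled in the introduction and in Section~4: a lattice $L$ equipped with the map $r(x,y)=(x\land y,x\lor y)$ is a set-theoretic solution of the Yang-Baxter equation \emph{if and only if} $L$ is distributive. Applying this to $S/\DD$ shows that $S/\DD$ is a distributive lattice, which is precisely the definition of $S$ being quasi-distributive.

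No serious obstacle is anticipated; the proof is essentially a formal consequence of Theorem \ref{th:left-sol-var} together with the observation that on a commutative structure the roles of $y\lor x$ and $x\lor y$ collapse, reducing the left distributive condition to ordinary distributivity.
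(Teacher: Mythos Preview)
Your proposal is correct and is exactly the argument the paper intends: the corollary is stated without proof in the paper, being the direct analogue of the proof given for the strong distributive case (variety closure under homomorphic images via Theorem~\ref{th:left-sol-var}, applied to the projection $\pi:S\to S/\DD$, followed by the observation that on a lattice $r_L=r$ so that the classical equivalence with distributivity applies). There is nothing to add.
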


In the previous section we obtained that strongly and co-strongly distributive skew lattices are strong distributive solution. The following result shows that these skew lattices are also left distributive solutions. Recall that a skew lattice is called strongly distributive if it satisfies the identities $(x\lor y)\land z = (x\land z)\lor (y\land z)$ and $x\land (y\lor z)=(x\land y)\lor (x\land z)$ and co-strongly distributive if it satisfies the identities $(x\land y)\lor z = (x\lor z)\land (y\lor z)$ and $x\lor (y\land z)=(x\lor y)\land (x\lor z)$.

\begin{proposition}\label{th:solution-sd}
	Let $S$ be a skew lattice that is strongly distributive or co-strongly distributive. Then $S$ is a left distributive solution of the Yang-Baxter equation. 
\end{proposition}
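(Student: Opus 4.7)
I will verify the identity from Theorem~\ref{th:left-sol-var},
\[
((y\lor x)\land z)\lor (x\land y) = ((y\land z)\lor x)\land (z\lor y),
\]
separately in each of the two cases, by direct expansion using only the relevant distributivity identities together with absorption. No appeal to Theorem~\ref{thm:lrhanded} should be needed.

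In the strongly distributive case, the plan is to apply the two strong distributivity identities to both sides so that each becomes a join of ``monomials'' of the form $a\land b$ (or $a\land b\land c$). The LHS collapses at once to $(y\land z)\lor (x\land z)\lor (x\land y)$. Two applications of strong distributivity on the RHS yield
\[
(y\land z)\lor\bigl((y\land z)\land y\bigr)\lor (x\land z)\lor (x\land y),
\]
and the extra middle summand will be absorbed by the preceding $(y\land z)$ via the absorption law $u\lor (u\land v)=u$, giving the same join.

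In the co-strongly distributive case the computation is dual. Using co-strong distributivity in the forms $(a\land b)\lor c=(a\lor c)\land (b\lor c)$ and $a\lor (b\land c)=(a\lor b)\land (a\lor c)$, together with one application of absorption on the LHS to simplify $(y\lor x)\lor (x\land y)$ down to $y\lor x$, both sides collapse to the same meet of joins
\[
(y\lor x)\land (z\lor x)\land (z\lor y).
\]

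The only delicate point, and the main place requiring care, is that neither $\land$ nor $\lor$ is commutative: the distributivity identities must be applied in the correct order, with the outer operand on the correct side. Once that bookkeeping is respected, nothing further is required. In contrast to Theorem~\ref{th:solution-strong}, strong and co-strong distributivity need not be assumed simultaneously, and no reduction to left- or right-handed factors is necessary.
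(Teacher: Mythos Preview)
Your proposal is correct and follows essentially the same strategy as the paper: verify identity~\eqref{eq:identity-left} directly by expanding each side with the (co-)strong distributivity laws and then absorbing. The only cosmetic difference is that in the strongly distributive case the paper simplifies $(y\land z)\land(z\lor y)$ in one step via $z\land(z\lor y)=z$, whereas you first expand it to $(y\land z)\lor((y\land z)\land y)$ and then absorb; both routes are valid and equally short.
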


\begin{proof}
	We give a proof for the case of strongly distributive skew lattices. The case of co-strongly distributive skew lattices is handled in a dual fashion. By Theorem \ref{th:left-sol-var}, we need to prove that $S$ satisfies the identity $((y\lor x)\land z)\lor (x\land y)=((y\land z)\lor x)\land (z\lor y)$. Let $x,y,z\in S$ be arbitrary. Using strong distributivity  $((y\lor x)\land z)\lor (x\land y)$
	simplifies to
	$(y\land z)\lor (x\land z)\lor (x\land y)$.
	On the other hand,  $((y\land z)\lor x)\land (z\lor y)$ simplifies to $(y\land z\land (z\lor y))\lor (x\land(z\lor y))$, which using absorption and strong distributivity further simplifies to  $(y\land z)\lor (x\land z)\lor (x\land y)$.
	%\begin{equation}\label{eq:right}
	%(x\land y\land z, ((y\land z)\land (z\lor y))\lor (x\land(z\lor y))
\end{proof}

In fact, the result of Proposition \ref{th:solution-sd} can be strengthened to a more general class of  skew lattices. 
%However, in order to be able to prove Theorem \ref{th:solution-dist-canc} below, we need to recall some further facts and definitions from skew lattice theory.

To prove Lemma \ref{lemma:coset-law} and Proposition \ref{th:solution-dist-canc-left} below we use the technique of Remark \ref{remark1}. 
%\ref{th:solRecall that for comparable $\DD$-classes $A, B$ in a skew lattice $S$ such that $A>B$ holds in the lattice $S/\DD$, the cosets of $A$ in $B$ form a partition of $B$, and similarly, the cosets of $B$ in $A$ form a partition of $A$. 
Furthermore, by \cite[Proposition 7]{Cv2011}, elements $a,a'$ of $A$ lie in a common coset of $B$ in $A$ if and only if $b\lor a\lor b=b\lor a'\lor b$ for all $b\in B$, which is further equivalent to $b\lor a\lor b=b\lor a'\lor b$ for some $b\in B$. Dually, elements $b,b'\in B$ lie in a common coset of $A$ in $B$ if and only if $a\land b\land a=a\land b'\land a$ for all $a\in A$, which is further equivalent to $a\land b\land a=a\land b'\land a$ for some $a\in A$. 
%Finally, recall that in order to prove that a pair of elements of $A$ are equal it suffices to show that they lie in the same coset of $B$ in $A$ \emph{and} are both above the same element of $B$ w.r.t. natural partial order. Likewise,  a pair of elements of $B$ are equal if and only if they lie in the same coset of $A$ in $B$ and they are both below the same element of $A$ w.r.t. natural partial order. %We will apply this technique in the proof of Theorem \ref{th:solution-dist-canc} below.

A \emph{skew diamond} $\{J>A,B>M\}$ is a sub-skew lattice of a skew lattice $S$ with four $\DD$-classes $A,B,M,J$, such that $M=A\land B$ and $J=A\lor B$. Given a skew diamond $\{J>A,B>M\}$, the cosets of $A$ in $J$ are given by $A\lor b\lor A$, where $b\in B$. Likewise, the cosets of $A$ in $M$ are given by $A\land b\land A$, where $b\in B$. 

Finally, we define some more varieties of skew lattices that are used in the following results. 
Recall that a skew lattice is called \emph{symmetric} if for any $x,y\in S$, $x\land y=y\land x$ if and only if $x\lor y=y\lor x.$ 
Moreover, a skew lattice is said to be \emph{upper symmetric} if $x\land y=y\land x$ implies $x\lor y=y\lor x$; and it is called \emph{lower symmetric} if $x\lor y=y\lor x$ implies $x\land y=y\land x$. Finally, a skew lattice is called \emph{simply cancellative}\label{simply cancellative} if it satisfies the following implication
\begin{eqnarray*}
	z\lor x \lor z = z\lor  y\lor z, z\land x \land z =z\land y \land z \Longrightarrow x = y.
\end{eqnarray*}

A skew lattice is cancellative if and only if it is simply cancellative and symmetric \cite{canc}.

\begin{lemma}\label{lemma:coset-law}
	Let $ S$ be a simply cancellative skew lattice, $\{J>A,B>M\}$  a skew diamond in $ S$ and  $x_1,x_2\in A$.
	\begin{itemize}
		\item[(i)] Let $S$ be upper symmetric. If $ B \lor x_1 \lor B= B \lor x_2\lor B$, then $M\lor x_1\lor M= M\lor x_2\lor M$.
		\item[(ii)] Let $S$ be lower symmetric. If $B\land x_1 \land B=B\land x_2 \land B$, then $J\land x_1 \land J=J\land x_2 \land J$.
	\end{itemize}
\end{lemma}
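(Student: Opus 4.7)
My plan is to establish (i) in detail and obtain (ii) by a fully dual argument. To prove (i), I first apply the coset characterization recalled just before the lemma (for the comparable pair $M < A$) to rewrite the desired equality $M \lor x_1 \lor M = M \lor x_2 \lor M$ as the condition that $m \lor x_1 \lor m = m \lor x_2 \lor m$ for every $m \in M$. I fix such an $m$, set $u := m \lor x_1 \lor m$ and $v := m \lor x_2 \lor m$ (both in $A$), and aim to prove $u = v$ by invoking simple cancellativity with the test element $b := m \lor b_0 \lor m$, where $b_0 \in B$ is arbitrary. This $b$ lies in $B$ (since $M \lor B = B$ in $S/\DD$) and satisfies $m \leq b$ by absorption.

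The meet equation $b \land u \land b = b \land v \land b$ is the easy half: both sides lie in the $\DD$-class $M$, and a short computation using $m \leq b$ together with $m \leq u$ (immediate from absorption) yields $m \leq b \land u \land b$; since $\leq$ forces $=$ inside a single $\DD$-class, this pins $b \land u \land b$ down to $m$, and the analogous argument gives $b \land v \land b = m$. The join equation $b \lor u \lor b = b \lor v \lor b$, after using $b \lor m = m \lor b = b$, reduces to the single identity
\[
b \lor x_1 \lor b = b \lor x_2 \lor b.
\]

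This identity is the heart of the argument and the sole place where upper symmetry is used. The hypothesis supplies only the set equality $B \lor x_1 \lor B = B \lor x_2 \lor B$ inside $J$; I expect upper symmetry to force each set $B \lor x_i \lor B$ to be a single coset of $B$ in $J$ (rather than a union of several cosets). Granting this, both $b \lor x_1 \lor b$ and $b \lor x_2 \lor b$ lie in the same coset of $B$ in $J$, and the coset characterization applied to the comparable pair $B < J$, specialised to the test element $b$ itself, then collapses (by idempotency of $\lor$) to the required pointwise equality. Simple cancellativity then delivers $u = v$, completing (i).

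For (ii) the argument dualises throughout: fix $j \in J$, choose $b := b_0 \land j \land b_0 \in B$ (which automatically satisfies $b \leq j$), set $u' := j \land x_1 \land j$ and $v' := j \land x_2 \land j$, and apply simple cancellativity with test element $b$. The roles of meet and join swap — the join equation becomes the easy one, via the $\DD$-class collapse argument in $J$, while the meet equation reduces to $b \land x_1 \land b = b \land x_2 \land b$, where lower symmetry plays the dual role of guaranteeing that $B \land x_i \land B$ is a single coset of $B$ in $M$. The main obstacle in both parts is identical: promoting the set-level coset equality supplied by the hypothesis to a pointwise identity at a specific test element, and this is the sole step that genuinely requires the (upper or lower) symmetry assumption.
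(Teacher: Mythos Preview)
Your argument takes a different route from the paper's. The paper argues by contradiction: assuming $a_1=m\lor x_1\lor m\neq m\lor x_2\lor m=a_2$ for some $m\in M$, it picks $b\in B$ with $m<b$, observes $a_i\land b=m=b\land a_i$, and then uses \emph{upper symmetry} to deduce $a_i\lor b=b\lor a_i=:j_i$. The hypothesis gives $j_1=j_2$, and the five points $\{m,a_1,a_2,b,j_1\}$ then form a sub-skew-lattice isomorphic to $\mathbf{NC}_5^{\RR}$ or $\mathbf{NC}_5^{\LL}$, which simply cancellative skew lattices cannot contain. You instead apply the simple-cancellativity implication directly with test element $b$.

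Your proof has a genuine gap, but not where you think. You correctly reduce the join condition to $b\lor x_1\lor b=b\lor x_2\lor b$ and then write that you ``expect upper symmetry to force each set $B\lor x_i\lor B$ to be a single coset of $B$ in $J$.'' That expectation is misplaced: the paper records, just before the lemma and with no symmetry hypothesis, that in any skew diamond the cosets of $B$ in $J$ are exactly the sets $B\lor a\lor B$ with $a\in A$. Given that general fact, the hypothesis says $b\lor x_1\lor b$ and $b\lor x_2\lor b$ lie in a common coset of $B$ in $J$; both lie above $b$ in the natural partial order, so the coset bijection of Remark~\ref{remark1} forces them to coincide. Equivalently, the characterisation from \cite{Cv2011} gives $b'\lor(b\lor x_i\lor b)\lor b'$ independent of $i$ for every $b'\in B$, and taking $b'=b$ collapses by idempotency. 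The paper's own proof treats this implication as immediate.

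Upper symmetry in the paper's argument serves a different purpose entirely: it turns the commuting meets $a_i\land b=b\land a_i$ into commuting joins $a_i\lor b=b\lor a_i$, which is exactly what is needed for the five-element set to close under both operations and yield an $\mathbf{NC}_5$ copy. Your direct route never needs that closure. So once you plug your gap with the coset argument above, your proof of (i) appears to go through without invoking upper symmetry at all --- a mild strengthening of the lemma as stated. The dual remarks apply to (ii).
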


\begin{proof}
	(i). Let $S$ be upper symmetric and
	$B \lor x_1 \lor B= B \lor x_2\lor B$. 
	Assume that $M\lor x_1\lor M\neq M\lor x_2\lor M$, so there exists $m\in M$ such that $a_1\neq a_2$, where $a_1=m \vee x_1\vee m$ and $a_2=m \vee x_2\vee m$. 
	%If  $M\lor x_1\lor M\neq M\lor x_2\lor M$, then $a_1\neq a_2$. 
	Note that $a_1,a_2\in A$, $m<a_1$, $m<a_2$. 
	Take  $b\in B$ such that $m<b$.  Since $m<a_1$ and $m<b$ it follows that $a_1\land b=m=b\land a_1$, and likewise $a_2\land b=m=b\land a_2$. Because $S$ is upper symmetric, we obtain  $a_1\lor b=b\lor a_1$  and $a_2\lor b=b\lor a_2$. Denote $j_1=a_1\lor b$ and $j_2=a_2\lor b$. The assumption  $B \lor x_1 \lor B= B \lor x_2\lor B$ implies $b\vee a_1 \vee b=b\vee x_1\vee b=b\vee x_2\vee b=b\vee a_2\vee b$. 
	%We obtain $j_1=b\lor a_1\lor b=b\lor m\lor x_1\lor m\lor b=b\lor x_1\lor b$ and likewise $j_2=b\lor x_2\lor b$. 
	It follows that $j_1=j_2$, and the set
	$\{m,a_1,a_2, b,j_1\}$ forms a subalgebra in $S'$, given by the following diagram:
	
	\[
	\begin{tikzpicture}[scale=.7]
	\node (1) at (0,2){$j_1$} ;
	\node (a) at (-3,0){$a_1$} ;
	\node (b) at (-1,0){$a_2$};
	\node (c) at (3,0){$b$}  ;
	\node (0) at (0,-2){$m$} ;
	\draw (1) -- (c) -- (0) -- (a) -- (1) -- (b) -- (0);
	\draw[dashed] (a) -- (b);
	\end{tikzpicture}
	\]

	The subalgebra $S'$ is isomorphic either to $\mathbf{NC}_5^{\RR}$ (a right handed skew lattice in which $a_1\land a_2=a_2$, $a_2\land a_1=a_1$, $a_1\lor a_2=a_1$ and $a_2\lor a_1=a_2$) or to $\mathbf{NC}_5^{\LL}$ (a left handed skew lattice in which $a_1\land a_2=a_1$, $a_2\land a_1=a_2$, $a_1\lor a_2=a_2$ and $a_2\lor a_1=a_1$). It was proven in \cite{canc} that a skew lattice is  simply cancellative if and only if it contains no sub-skew lattice isomorphic to $\mathbf{NC}_5^{\RR}$ or $\mathbf{NC}_5^{\LL}$. Thus $S$ is not simply cancellative, which is a contradiction.

	The proof of (ii) is similar. 
\end{proof}

\begin{proposition}\label{th:solution-dist-canc-left}
	Let $S$ be a left (respectively right) handed, distributive, simply cancellative and lower (respectively upper) symmetric skew lattice. Then $S$ is a left distributive solution of the Yang-Baxter equation. 
\end{proposition}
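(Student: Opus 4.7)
By Theorem \ref{th:left-sol-var} it suffices to verify that $S$ satisfies the identity
\begin{equation}\label{eq:identity-to-prove}
((y\lor x)\land z)\lor (x\land y) = ((y\land z)\lor x)\land (z\lor y).
\end{equation}
By Theorem \ref{thm:lrhanded} we may treat the left-handed, lower symmetric case alone; the right-handed, upper symmetric case is handled dually by swapping $\land \leftrightarrow \lor$ and invoking Lemma \ref{lemma:coset-law}(i) in place of (ii). Fix $x,y,z\in S$ and let $u$ and $v$ denote the left- and right-hand sides of \eqref{eq:identity-to-prove}.

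First, I will observe that $u\,\mathcal D\, v$. Since $S$ is distributive it is quasi-distributive, so $S/\mathcal D$ is a distributive lattice, and in $S/\mathcal D$ both sides of \eqref{eq:identity-to-prove} reduce to the same element by the standard distributive laws. Let $N$ denote the common $\mathcal D$-class of $u$ and $v$. By Remark \ref{remark1}, to conclude $u=v$ it is enough to exhibit a $\mathcal D$-class $C$ comparable to $N$ such that (a) $u$ and $v$ lie in a common coset of $C$ in $N$, and (b) $u$ and $v$ are in the same position relative to $C$ under the natural partial order. For (b), I plan a direct calculation: in the left-handed setting the skew-lattice distributive identities \eqref{eq:D1}--\eqref{eq:D2} collapse to the one-sided laws $x\land(y\lor z)=(x\land y)\lor(x\land z)$ and $(y\land z)\lor x = (y\lor x)\land(z\lor x)$, and combining these with absorption and the regularity identities \eqref{eq:reg1}--\eqref{eq:reg2} an explicit element built as a meet-join expression in $x,y,z$ can be shown to stand in the natural partial order in the same way to both $u$ and $v$.

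The main obstacle is (a). Because distinct elements of a common $\mathcal D$-class of a left-handed skew lattice are never comparable under $\leq$, the needed coset equality is not directly visible from a single computation, and Lemma \ref{lemma:coset-law}(ii) is indispensable. The plan is to select a $\mathcal D$-class $B$ (obtained from a suitable subexpression of the two sides of \eqref{eq:identity-to-prove}) that is incomparable to $N$ and forms a skew diamond $\{J=N\lor B>N,B>M=N\land B\}$ with $u,v\in N$; then verify directly, using left-handedness and the one-sided distributive laws together with absorption and regularity, the lower coset equality $B\land u\land B=B\land v\land B$ inside $M$; and finally apply Lemma \ref{lemma:coset-law}(ii) to transport this to the upper coset equality $J\land u\land J=J\land v\land J$ in $N$, which states precisely that $u$ and $v$ lie in a common coset of $J$ in $N$. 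Taking $C=J$ and combining with (b) then yields $u=v$ by Remark \ref{remark1}. The reason both simple cancellativity and lower symmetry appear in the hypotheses is exactly this: without either one $S$ could contain $\NCFL$ or $\NCFR$ as a sub-skew-lattice and the coset-transport step via Lemma \ref{lemma:coset-law} would break down. The most delicate ingredients of the proof are therefore the choice of the auxiliary $\mathcal D$-class $B$ and the careful verification of the lower coset identity that feeds into Lemma \ref{lemma:coset-law}(ii).
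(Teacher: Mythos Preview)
Your overall strategy matches the paper's: reduce to identity \eqref{eq:identity-left} via Theorem~\ref{th:left-sol-var}, observe $u\,\DD\,v$ by quasi-distributivity, and then separate $u$ from $v$ (or rather, identify them) using coset geometry together with Lemma~\ref{lemma:coset-law}(ii). The calculation you sketch for part~(b) is exactly the paper's step~(d): both $u$ and $v$ lie below $y\lor x$. The difficulty is your plan for part~(a).

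You propose to pick a \emph{single} $\DD$-class $B$ incomparable to $N=\DD_u$, verify the lower coset identity $B\land u\land B=B\land v\land B$ directly, and then invoke Lemma~\ref{lemma:coset-law}(ii) once to obtain $J\land u\land J=J\land v\land J$ with $J=N\lor B$. But for Remark~\ref{remark1} to finish the argument you also need $u,v$ to lie below a common element of \emph{that same} $J$. The only common upper bound one can produce by a clean calculation is $y\lor x$, which lives in $\DD_x\lor\DD_y$; so you need $J=\DD_x\lor\DD_y$. There is no single incomparable $B$ with $N\lor B=\DD_x\lor\DD_y$ for which the lower coset identity is directly checkable. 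The paper resolves this by running your step~(a) \emph{twice}: once with $B=\DD_x$ (yielding, via Lemma~\ref{lemma:coset-law}(ii), that $u,v$ share a coset of $A:=N\lor\DD_x$ in $N$) and once with $B=\DD_y$ (same for $B':=N\lor\DD_y$). It then appeals to a result from \cite{L6}---valid precisely under lower symmetry---that in the skew diamond $\{J>A,B'>N\}$ the cosets of $J$ in $N$ are the intersections of cosets of $A$ with cosets of $B'$. Only then does one get the $J$-coset equality needed to pair with $u,v\le y\lor x$.

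So the gap is concrete: one application of Lemma~\ref{lemma:coset-law}(ii) is not enough, and your outline omits the second application and the coset-intersection step from \cite{L6}. This is also where lower symmetry enters a second time (beyond its use inside Lemma~\ref{lemma:coset-law}), which your account does not reflect.
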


\begin{proof}
	Let $x,y,z\in S$ be arbitrary and denote $\alpha = ((y\lor x)\land z)\lor (x\land y)$, $\beta=((y\land z)\lor x)\land (z\lor y)$. Denote the corresponding $\DD$-classes by  $X=\DD_x$, $Y=\DD_y$, $Z=\DD_z$,  $M=\DD_\alpha$. By Theorem \ref{th:left-sol-var} we need to prove that $\alpha=\beta$. Since $S$ is distributive it follows that $S/\DD$ is a distributive lattice, and thus a left distributive solution by Corollary \ref{cor:maximal-lat-im-dist-sol}, i.e. $\alpha = \beta$ in $S/\DD$. Hence, $\alpha \,\DD\, \beta$. We divide the proof into several steps.
	
	First, we consider the following skew diamond:
	
	\[
	{\xymatrix @H=1pt {
			& M\lor X                                    &                       \\
			M\ar@{-}[ur]   &                 & X \ar@{-}[ul]   \\
			& M\land X  \ar@{-}[ur]\ar@{-}[ul]&
		}}
		\]
		
		(a) We claim that $X\land \alpha \land X=X\land \beta \land X$, i.e. $x\land \alpha \land x$ and $x\land \beta \land x$ lie in the same coset of $X$ in $M\land X$. Since the cosets of $X$ form a partition of $M\land X$, it suffices to prove $x\land \alpha \land x=x\land \beta \land x.$ Using distributivity (\ref{eq:D1}), regularity (\ref{eq:reg1}) and absorption we obtain
		\begin{align*}
		x\land \alpha\land x & = (x\land ((y\lor x)\land z)\land x)\lor (x\land (x \land y)\land x) \\
		& = (x\land (y\lor x)\land x\land z\land x)\lor (x\land y\land x) \\
		& =  (x\land z\land x)\lor (x\land y\land x) \\
		& =  x\land (z\lor y)\land x.
		\end{align*}
		On the other hand, using regularity (\ref{eq:reg1}) and absorption we obtain
		\begin{align*}
		x\land \beta \land x &= x\land ((y\land z)\lor x) \land x\land (z\lor y) \land x \\
		& = x\land (z\lor y)\land x.
		\end{align*}

		(b) We claim that $(M\lor X)\land \alpha \land (M\lor X)=(M\lor X)\land \beta\land (M\lor X)$.  By Lemma \ref{lemma:coset-law}, given any skew diamond $\{J>A,B>M\}$ in a simply cancellative and lower symmetric skew lattice $S$ and any $a,a'\in A$, $B\land a\land B=B\land a'\land B$ implies $J\land a\land J=J\land a'\land J$. Applying this to the above diagram ($X$ in the role of $B$, $M\lor X$ in the role of $J$, $\alpha$ in the role of $a$, and $\beta$ in the role of $b'$),  (a) yields exactly $(M\lor X)\land \alpha \land (M\lor X)=(M\lor X)\land \beta\land (M\lor X)$.
		
		Denote further $A=M\lor X$, $B=M\lor Y$ and $J=A\lor B$. Note that $M=(X\land Y)\lor (X\land Z)\lor (Y\land Z)$, $A\land B=M$ and $J=X\lor Y$. We consider the  skew diamond below:
		\[
		{\xymatrix @H=1pt {
				& J\                                   &                       \\
				A \ar@{-}[ur]   &                 & B \ar@{-}[ul]   \\
				& M  \ar@{-}[ur]\ar@{-}[ul]&
			}}
			\]
			
			(c) We claim that $J\land \alpha\land J=J\land \beta\land J$. Observe that using our new notation, we have just proved that $\alpha$ and $\beta$ lie in the same coset of $A$ in $M$. Similarly, we can prove that they lie in the same coset of $B$ in $M$. (In order to prove this we first need to repeat step (a) above, using $Y$ instead of $X$ and show $y\land \alpha \land y=y\land \beta\land y$.) By a result of \cite{L6}, given any skew diamond in a lower symmetric skew lattice, cosets of $J$ in $M$ are exactly intersections of cosets of $A$ in $M$ by cosets of $B$ in $M$. It follows that $\alpha$ and $\beta$ lie in the same coset of $J$ in $M$.
			
			(d)  We have just proven that $\alpha $ and $\beta $ lie in the same coset of $J$ in $M$. In order to prove that their are equal it thus suffices to show that both lie below a common element of $J$. In fact, we claim that $\alpha\leq y\lor x$ and $\beta \leq y\lor x$.  
			Using absorption we obtain $y\lor x\lor \alpha = y\lor x\lor ((y\lor x)\land z)\lor (x\land y)=y\lor x\lor (x\land y)=y\lor x$. On the other hand, since $S$ is left handed, we obtain $\alpha \lor (y\lor x)=(y\lor x)\lor \alpha\lor (y\lor x)=(y\lor x)\lor (y\lor x)=y\lor x$, and thus $\alpha \leq y\lor x$.
			
			Moreover, since $S$ is left handed, we obtain $\beta=(x\lor (y\land z)\lor x)\land (z\lor y)$, which by distributivity expands to $(x\lor y\lor x)\land (x\lor z\lor x)\land (z\lor y)$, and then by left handedness to $(y\lor x)\land (z\lor x)\land (z\lor y)$. It follows that $(y\lor x)\land \beta =\beta$, and $\beta\land (y\lor x)=\beta\land (y\lor x)\land \beta=\beta\land \beta=\beta$. Thus $\beta\leq y\lor x.$
			
			Elements $\alpha$, $\beta$ lie in the same coset of $J$ in $M$ and are below the same element of $J$, thus they must be equal.
			
			The case where $S$ is a right handed, distributive, simply cancellative and upper symmetric skew lattice is handled similarly.
		\end{proof}

		The following theorem characterizes a left distributive solution in terms of varieties of a skew lattice. We define a \emph{left cancellative} skew lattice as a skew lattice satisfying implication (\ref{C1}):
		\begin{align*}
		x \lor y = x \lor z, x \land y = x \land z \Longrightarrow y = z. 
		\end{align*}
		
		\begin{theorem}\label{th:solution-dist-canc}
			Let $S$ be a skew lattice. Then, $S$ is distributive and  left cancellative if and only if $S$ is a left distributive solution of the Yang-Baxter equation. 
		\end{theorem}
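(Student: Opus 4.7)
My plan uses Theorem~\ref{th:left-sol-var}, which characterises ``$S$ is a left distributive solution'' by the single identity
\[
((y\lor x)\land z)\lor (x\land y)=((y\land z)\lor x)\land (z\lor y),
\]
whose two sides I denote $L$ and $R$. The problem is then the equivalence ``$L=R$ for all $x,y,z$'' iff ``$S$ is distributive and left cancellative''.

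For ``$\Leftarrow$'', assume $S$ is distributive and left cancellative. The first step is to invoke Theorem~\ref{thm:lrhanded} to split into the left handed and right handed cases; this is the same reduction used in Proposition~\ref{th:solution-dist-canc-left}. In the left handed case, distributivity collapses to the one-sided laws $x\land(y\lor z)=(x\land y)\lor(x\land z)$ and $(y\land z)\lor x=(y\lor x)\land(z\lor x)$ (since $x\land w\land x=x\land w$ and $x\lor w\lor x=w\lor x$), and together with the absorption and regularity laws \eqref{eq:reg1}--\eqref{eq:reg2} one can rewrite both $L$ and $R$ into comparable normal forms. Following the coset-style analysis of Proposition~\ref{th:solution-dist-canc-left}, I expect to verify directly that some element $a$ built from $x,y,z$ (the natural first guesses are $a=x$, $a=y\lor x$, or $a=y\lor x\lor z$) satisfies both $a\land L=a\land R$ and $a\lor L=a\lor R$; a single application of left cancellation then yields $L=R$. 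The right handed case is handled dually.

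For ``$\Rightarrow$'', assume the identity. Corollary~\ref{cor:maximal-lat-im-dist-sol} already gives quasi-distributivity, but one must upgrade to full distributivity and derive left cancellation. To obtain \eqref{eq:D1}--\eqref{eq:D2}, I would substitute specific patterns into the identity designed so that absorption collapses large parts of each side: for example, triples such as $(x,y,z)\mapsto(x,u,x)$ together with manipulations in the $\DD$-class of $x$ should isolate the $x\land(\,\cdot\,)\land x$ and $x\lor(\,\cdot\,)\lor x$ expressions appearing in (D1) and (D2). To obtain left cancellation, assume $a\lor b=a\lor c$ and $a\land b=a\land c$; I plan to apply the identity to the triples $(x,y,z)=(b,a,c)$ and $(x,y,z)=(c,a,b)$. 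In each case the cancellation hypotheses together with absorption collapse one of the two $\lor$/$\land$ compounds on each side, leaving two equations whose combination is forced to imply $b=c$.

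The main obstacle will be the ``$\Rightarrow$'' direction: extracting two non-trivial distributivity identities and one equational implication from a single equational identity requires choosing substitutions so that the absorption and regularity laws perform exactly the right bookkeeping, and the second (cancellation) step in particular requires combining several specific instances of the identity in a way that cleanly eliminates the unwanted terms.
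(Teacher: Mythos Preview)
For the direction ``distributive and left cancellative $\Rightarrow$ left distributive solution'', the paper's argument agrees with yours at the top level (split into left and right factors via Theorem~\ref{thm:lrhanded}), but it does \emph{not} redo any computation after that. Instead it translates ``left handed and left cancellative'' into ``left handed, simply cancellative, lower symmetric'' using \cite[Theorem~5.1]{canc}, and then simply cites Proposition~\ref{th:solution-dist-canc-left}, which has already established $L=R$ for exactly that class; the right factor is handled dually (right handed $+$ left cancellative $\Rightarrow$ simply cancellative $+$ upper symmetric). Your plan to locate a single element $a$ with $a\land L=a\land R$ and $a\lor L=a\lor R$ and then apply left cancellation once is a genuinely different idea: the proof of Proposition~\ref{th:solution-dist-canc-left} does not work that way (it uses coset partitions and Lemma~\ref{lemma:coset-law}, not a single cancellation), and you have not actually identified the witness $a$. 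So this direction is correct in spirit but your sketch diverges from the paper and leaves the key computation unspecified; citing Proposition~\ref{th:solution-dist-canc-left} directly is both shorter and already complete.

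For the converse direction (``left distributive solution $\Rightarrow$ distributive and left cancellative''), the paper offers no human argument at all: it simply records that \emph{Prover9} produced a derivation. Your substitution plan is therefore not being compared against anything explicit, and your assessment that this is the hard direction is confirmed by the fact that the authors outsourced it to a machine. If you intend to write this out by hand, be aware that nothing in the paper will help you, and that extracting an equational implication (left cancellation) from a single identity typically requires several carefully chained instances rather than one or two substitutions.
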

		
		\begin{proof}
			Let $S$ be any distributive and left cancellative skew lattice. The left factor $S_L$ of $S$ is a left handed and left cancellative skew lattice, and thus  it is simply cancellative and lower symmetric  by  \cite[Theorem 5.1]{canc}. Since $S_L$ is also a distributive skew lattice, it  follows  by Proposition \ref{th:solution-dist-canc-left} that it is a left distributive solution. Dually, the right factor $S_R$ of $S$ is a right handed, distributive and left cancellative skew lattice, and thus  it is simply cancellative and upper symmetric  by a result of \cite[Theorem 5.1]{canc}. It follows by Proposition \ref{th:solution-dist-canc-left} that $S_R$ is also a left distributive solution. By Theorem \ref{thm:lrhanded}, $S $ is a left distributive solution. That proves the direct implictation.
			
			The converse was proven by the Automated Theorem Prover \emph{Prover9} \cite{McCune}, which was able to derive a proof that every left distributive solution is distributive and left cancellative.
		\end{proof}

		Similar to strong distributive solutions, the set-theoretic solution obtained from a left distributive solution will be degenerate in general. Nevertheless, there are examples where the solution is left non-degenerate. Take for instance the skew lattice from Example \ref{ex1}. This skew lattice $S$ is a left distributive solution and one can see that $r_L(x,y) = (y,y)$, for all $x,y \in S$. Hence, we obtain a left non-degenerate solution.

		\subsection{Right distributive solutions}
		Let $S$ be a skew lattice. Consider the map $r_R:S\times S \rightarrow S \times S$ defined by
		\begin{equation}\label{eq:defrr}
		r_R(x,y)=(y\land x, x\lor y).
		\end{equation}
		We say that a skew lattice $S$ is a \emph{right distributive solution} of the Yang-Baxter equation, if $(S, r_R)$ is a set-theoretic solution of the Yang-Baxter equation \eqref{eq:yb}. Note that $r_R = r_L \circ r'$, where $r'$ is the twist map $r'(x,y)=(y,x)$.
		
		The following theorem is proven in a similar fashion as the corresponding results for left distributive solutions. The latter used left cancellative skew lattices. So for the next theorem we need the definition of a \emph{right cancellative} skew lattice, which is a skew lattice satisfying implication (\ref{C2}):
		\begin{align*}
		x \lor z = y \lor z, x \land z = y \land z \Longrightarrow x = y. %\label{C2}
		\end{align*}

		\begin{theorem}\label{th:right-sol-var}
			\begin{itemize}
				\item[(i)] The class of right distributive solutions of the Yang-Baxter equation is a variety. Moreover, this variety is defined by the identity
				\begin{equation}\label{eq:identity-right}
				(y\land x)\lor (z\land (x\lor y))=(y\lor z)\land (x\lor (z\land y)).
				\end{equation}
				\item[(ii)] Right distributive solutions are always idempotent, i.e. $r_R^2=r_R$.
				\item[(iii)] Every strong distributive solution is also a right distributive solution.
				\item[(iv)] Every left handed, distributive, simply cancellative and upper symmetric skew lattice is a right distributive solution.
				\item[(v)] Every right handed, distributive, simply cancellative and lower symmetric skew lattice is a right distributive solution.
				\item[(vi)] Every distributive and right cancellative skew lattice is a right distributive solution.
				\item[(vii)] [Modulo \emph{Prover9}] A skew lattice is a right distributive solution if and only if it is distributive and right cancellative.
			\end{itemize}
		\end{theorem}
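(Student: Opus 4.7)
The overall plan mirrors the development for left distributive solutions in Theorems \ref{th:left-sol-var}, \ref{th:solution-dist-canc-left}, and \ref{th:solution-dist-canc}, exploiting the observation $r_R = r_L \circ \tau$ (with $\tau$ the twist map) and the natural duality between left and right cancellation. For (i), I would expand both sides of the Yang-Baxter equation applied to $(x,y,z)$ under $r_R$. Absorption laws simplify the first coordinate of both sides to $z \land y \land x$ (via $(x \lor y) \land y = y$) and the third coordinate to $x \lor y \lor z$ (via $(z \land y) \lor y = y$), so the equation reduces precisely to \eqref{eq:identity-right}. Part (ii) is immediate: $r_R^2(x,y) = r_R(y \land x, x \lor y) = ((x \lor y) \land (y \land x), (y \land x) \lor (x \lor y))$, which collapses by absorption to $(y \land x, x \lor y) = r_R(x,y)$.

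For (iii), I would verify that the defining identities \eqref{eq:identity-strong1}--\eqref{eq:identity-strong3} of strong distributive solutions imply \eqref{eq:identity-right}. Since \eqref{eq:identity-right} and \eqref{eq:identity-strong2} differ in the order of certain meet and join factors, no transparent substitution relates them; the cleanest verification is mechanical, via \emph{Prover9}, in the same spirit as the converse direction of (vii).

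Parts (iv) and (v) follow the template of Proposition \ref{th:solution-dist-canc-left}. Setting $\alpha = (y \land x) \lor (z \land (x \lor y))$ and $\beta = (y \lor z) \land (x \lor (z \land y))$, the plan is to show $\alpha = \beta$ in stages: first, $\alpha \,\DD\, \beta$ follows from quasi-distributivity of $S/\DD$; then one checks that $\alpha$ and $\beta$ lie in a common coset inside their shared $\DD$-class using Lemma \ref{lemma:coset-law}; and finally, one exhibits a common upper or lower bound in the adjacent $\DD$-class to conclude $\alpha = \beta$. The crucial change from the left distributive argument is that the symmetry hypotheses are swapped: (iv) requires \emph{upper} symmetry in the left handed case and (v) requires \emph{lower} symmetry in the right handed case. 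This reversal is dictated by the fact that the right distributive identity swaps the roles of the two arguments under $\land$, so the coset information must be transported through the opposite half of Lemma \ref{lemma:coset-law}.

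For (vi), by Theorem \ref{thm:lrhanded} it suffices to verify the assertion separately on the left and right factors. The left factor $S/\RR$ of a distributive right cancellative skew lattice is left handed, distributive, and right cancellative, hence simply cancellative and upper symmetric by \cite[Theorem 5.1]{canc}; thus it is a right distributive solution by (iv). Dually, the right factor $S/\LL$ is right handed, distributive, simply cancellative, and lower symmetric, hence a right distributive solution by (v). This yields (vi), and (vii) then combines (vi) with a \emph{Prover9} verification of the converse, as in Theorem \ref{th:solution-dist-canc}. The main obstacle lies in parts (iv)--(v): carefully adapting the coset calculus of Proposition \ref{th:solution-dist-canc-left} to the right distributive identity, correctly identifying which half of Lemma \ref{lemma:coset-law} to invoke, and producing the appropriate common bound of $\alpha$ and $\beta$ in the final step.
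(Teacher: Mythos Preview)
Your proposal is correct and follows essentially the same approach as the paper, which explicitly states that Theorem \ref{th:right-sol-var} ``is proven in a similar fashion as the corresponding results for left distributive solutions'' and gives no further detail. Your computations for (i) and (ii), the coset-calculus adaptation for (iv)--(v) with the swapped symmetry hypotheses, and the factor-by-factor reduction for (vi) via \cite[Theorem 5.1]{canc} all mirror the left-handed development precisely. The one place where your outline is slightly vaguer than the rest is (iii): there is no exact left-handed analogue of this claim in the paper (Proposition \ref{th:solution-sd} treats strongly or co-strongly distributive skew lattices, not arbitrary strong distributive solutions), so deferring to \emph{Prover9} is reasonable and consistent with the paper's own reliance on automated proof for the converse directions.
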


		Similar to strong distributive solutions, the set-theoretic solution obtained from a right distributive solution will be degenerate in general. Nevertheless, there are examples where the solution is right non-degenerate, and thus not degenerate. Take again the skew lattice from Example \ref{ex1}. This skew lattice $S$ is a right distributive solution and one can see that $r_R(x,y)=(x,x)$, for all $x,y \in S$. Hence,  $(S,r_R)$ is a right non-degenerate solution.
		
		\subsection{Weak distributive solutions}
		
		Let $S$ be a skew lattice. Consider the map $r_W:S\times S \rightarrow S \times S$ defined by
		\begin{equation}\label{eq:defrw}
		r_W(x,y)=(x\land y\land x, x\lor y\lor x).
		\end{equation}
		We say that a skew lattice $S$ is a \emph{weak distributive solution} of the Yang-Baxter equation, if $(S, r_W)$ is a set-theoretic solution of the Yang-Baxter equation \eqref{eq:yb}.
		
		\begin{theorem}\label{th:weak-sol-var}
			The class of weak distributive solutions of the Yang-Baxter equation is a variety. Moreover, this variety is defined by the identity
			\begin{align*}
			&(x \land y\land x) \lor ((x \lor y \lor x) \land z \land (x \lor y \lor x)) \lor (x \land y\land x)
			\\ &= (x \lor (y\land z \land y) \lor x) \land (y \lor z \lor y) \land (x \lor (y\land z \land y) \lor x).
			\end{align*}
		\end{theorem}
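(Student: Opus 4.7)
The plan is to imitate the pattern used in Theorems \ref{th:str-sol-var} and \ref{th:left-sol-var}: compute the two compositions appearing in the Yang-Baxter equation component-wise, show that the first and third components automatically agree in every skew lattice, and recognize that the equality of the second components is precisely the displayed identity. Since the class is then defined by a single identity, Birkhoff's Theorem gives immediately that it is a variety.

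Concretely, I would first expand
\[
(r_W\times \id)(\id\times r_W)(r_W\times \id)(x,y,z)
\]
into a triple whose third component is $(x\lor y\lor x)\lor z\lor (x\lor y\lor x)$, whose first component is $(x\land y\land x)\land \bigl((x\lor y\lor x)\land z\land (x\lor y\lor x)\bigr)\land (x\land y\land x)$, and whose second component is the left-hand side of the displayed identity. Dually, I would expand
\[
(\id\times r_W)(r_W\times \id)(\id\times r_W)(x,y,z)
\]
to obtain a triple with first component $x\land (y\land z\land y)\land x$, third component $(x\lor (y\land z\land y)\lor x)\lor (y\lor z\lor y)\lor (x\lor (y\land z\land y)\lor x)$, and second component equal to the right-hand side of the displayed identity.

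The core technical step is to verify that the first components both reduce to $x\land y\land z\land y\land x$ and the third components both reduce to $x\lor y\lor z\lor y\lor x$, using only the skew lattice axioms. For the first components, on the left-hand side I would use the fact that $x\land y\land x\leq x\leq x\lor y\lor x$ (so that $x\land y\land x$ absorbs $x\lor y\lor x$ on both sides under $\land$) together with the regular band identity \eqref{eq:reg1} applied twice with $a=x$; on the right-hand side only associativity is needed. For the third components, the left-hand side is handled by iterated applications of the regular band identity \eqref{eq:reg2} with $a=x$, and the right-hand side additionally uses the absorption identities $y\lor (y\land z\land y)=y$ and $(y\land z\land y)\lor y=y$ to eliminate the inner $y\land z\land y$ factors before applying \eqref{eq:reg2}.

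Once both first and third components are shown to agree unconditionally, the YBE for $r_W$ is equivalent to equality of the second components, which is exactly the identity stated in the theorem; the variety assertion then follows from Birkhoff's Theorem. The main obstacle I anticipate is keeping the multi-term $\land$- and $\lor$-products under control in the component simplifications; the arithmetic is routine, but the simultaneous bookkeeping of regular band reductions and absorptions must be done carefully, particularly on the right-hand side of the third component where the term $y\land z\land y$ has to be removed before \eqref{eq:reg2} can do its work.
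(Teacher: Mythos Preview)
Your proposal is correct and follows essentially the same approach as the paper: expand both sides of the Yang--Baxter equation into triples, reduce the outer components to $x\land y\land z\land y\land x$ and $x\lor y\lor z\lor y\lor x$ via absorption and the regularity identities \eqref{eq:reg1}--\eqref{eq:reg2}, and observe that equality of the middle components is exactly the displayed identity. The only minor slip is the order of operations on the third component of the right-hand side: regularity \eqref{eq:reg2} must be applied \emph{first} to remove the inner occurrences of $x$ (bringing each $y\land z\land y$ adjacent to a $y$), and only then does absorption eliminate the $y\land z\land y$ terms---not the other way around.
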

		
		\begin{proof}
			Denote $r=r_W$ and let $x,y,z \in S$. Then
			\begin{align*}
			&(r\times \id) (\id\times r)(r\times \id) (x, y,z) \\&=(r\times \id) (\id\times r)(x\land y \land x,x\lor y \lor x, z) 
			\\ &= (r\times \id)(x \land y\land x, (x \lor y \lor x) \land z \land (x \lor y \lor x), (x \lor y \lor x) \lor z \lor (x \lor y \lor x))
			\\ &= ((x \land y\land x) \land ((x \lor y \lor x) \land z \land (x \lor y \lor x)) \land (x \land y\land x),
			\\ & \qquad (x \land y\land x) \lor ((x \lor y \lor x) \land z \land (x \lor y \lor x)) \lor (x \land y\land x) , 
			\\ & \qquad (x \lor y \lor x) \lor z \lor (x \lor y \lor x))
			\end{align*}
			and
			\begin{align*}
			&(\id\times r)(r\times \id)(\id\times r) (x, y,z) \\&=(\id\times r)(r\times \id)(x, y\land z \land y, y \lor z \lor y)
			\\ &= (\id\times r)(x \land (y\land z \land y) \land x, x \lor (y\land z \land y) \lor x, y \lor z \lor y)
			\\ &= (x \land (y\land z \land y) \land x, 
			\\ & \qquad (x \lor (y\land z \land y) \lor x) \land (y \lor z \lor y) \land (x \lor (y\land z \land y) \lor x), 
			\\ & \qquad (x \lor (y\land z \land y) \lor x) \lor (y \lor z \lor y) \lor (x \lor (y\land z \land y) \lor x)).
			\end{align*}
			
			Using absorption and regularity (\ref{eq:reg1}) we deduce 
			$(x \land y\land x) \land ((x \lor y \lor x) \land z \land (x \lor y \lor x)) \land (x \land y\land x)=x\land y\land z\land y\land x$. Likewise, using  regularity (\ref{eq:reg2}) we deduce $(x \lor y \lor x) \lor z \lor (x \lor y \lor x)=x\lor y\lor z\lor y\lor x$, and using absorption and regularity (\ref{eq:reg2}), $(x \lor (y\land z \land y) \lor x) \lor (y \lor z \lor y) \lor (x \lor (y\land z \land y) \lor x)=x\lor y\lor z\lor y\lor x$.
			
			Thus, the class of weak distributive solutions is defined by the identity
			\begin{align*}
			&(x \land y\land x) \lor ((x \lor y \lor x) \land z \land (x \lor y \lor x)) \lor (x \land y\land x)
			\\ &= (x \lor (y\land z \land y) \lor x) \land (y \lor z \lor y) \land (x \lor (y\land z \land y) \lor x).
			\end{align*}
		\end{proof}

		\begin{lemma}\label{lemma:left-right-weak}
			\begin{itemize}
				\item[(i)] Let $S$ be a left handed skew lattice. Then, given any $x,y\in S$, $r_W(x,y)=r_L(x,y).$
				\item[(ii)] Let $S$ be a right handed skew lattice. Then, given any $x,y\in S$, $r_W(x,y)=r_R(x,y).$
			\end{itemize}
		\end{lemma}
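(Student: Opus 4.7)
The plan is to verify both identities by direct substitution, using the equivalent algebraic characterizations of left-handedness and right-handedness recalled in the preliminaries. Specifically, we rely on the fact that a skew lattice is left handed if and only if it satisfies both $x\land y\land x = x\land y$ and $x\lor y\lor x = y\lor x$, and right handed if and only if it satisfies both $x\land y\land x = y\land x$ and $x\lor y\lor x = x\lor y$.

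For part (i), assume $S$ is left handed and fix $x,y\in S$. The strategy is to compute each coordinate of $r_W(x,y) = (x\land y\land x,\, x\lor y\lor x)$ separately and reduce it using the left-handed identities. The first coordinate simplifies to $x\land y$ by the identity $x\land y\land x = x\land y$, while the second coordinate simplifies to $y\lor x$ by the identity $x\lor y\lor x = y\lor x$. This yields exactly $r_L(x,y) = (x\land y,\, y\lor x)$.

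For part (ii), assume $S$ is right handed. The same strategy applies, but with the dual identities. The first coordinate of $r_W(x,y)$ becomes $y\land x$ using $x\land y\land x = y\land x$, and the second coordinate becomes $x\lor y$ using $x\lor y\lor x = x\lor y$. Thus $r_W(x,y) = (y\land x,\, x\lor y) = r_R(x,y)$, as required.

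The proof is essentially a routine verification, so there is no substantive obstacle; the only thing one must be careful about is invoking the correct form of the handedness identity for each coordinate (the meet identity for the first component, the join identity for the second). Because the statement splits neatly along the left-handed/right-handed dichotomy, and because each handedness class is characterized by precisely the pair of identities needed, the verification is a matter of a line or two in each case.
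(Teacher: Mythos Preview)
Your proof is correct and follows essentially the same approach as the paper: the paper's proof is a one-line remark that direct application of the left-handed (respectively right-handed) identities to the definition of $r_W$ yields the definition of $r_L$ (respectively $r_R$), which is exactly what you have carried out explicitly.
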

		
		\begin{proof}
			Direct application of left [right]-handedness to the defining identities for weak distributive solutions yields defining identities for left [right] distributive solutions.
		\end{proof}
		
		\begin{theorem}\label{th:weak-sol}
			\begin{itemize}
				\item[(i)] Weak distributive solutions are always idempotent, i.e. $r_W^2=r_W$.
				\item[(ii)] Every strong distributive solution is  a weak distributive solution.
				\item[(iii)] Every distributive, simply cancellative and lower symmetric skew lattice is a weak distributive solution.
				\item[(iv)] [Modulo \emph{Prover9}]
				A skew lattice is a weak distributive solution if and only if it is distributive, simply cancellative and lower symmetric.
			\end{itemize}
		\end{theorem}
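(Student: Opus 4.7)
The proof treats the four parts in turn. A common template for (ii)--(iv) is to use Theorem \ref{thm:lrhanded} to reduce the defining identity of weak distributive solutions (Theorem \ref{th:weak-sol-var}) to its left handed and right handed specializations, and then use Lemma \ref{lemma:left-right-weak} to identify $r_W$ with $r_L$ in the left handed case and with $r_R$ in the right handed case. For (i), a short computation does the job: setting $a=x\land y\land x$ and $b=x\lor y\lor x$, one has $a\leq x\leq b$ in the natural partial order; hence $a\leq b$, whence $a\land b=a=b\land a$ and $a\lor b=b=b\lor a$. Therefore $r_W^2(x,y)=r_W(a,b)=(a\land b\land a,a\lor b\lor a)=(a,b)=r_W(x,y)$.

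For (ii), I would argue as follows. By Theorem \ref{thm:lrhanded}, the identity of Theorem \ref{th:weak-sol-var} holds in $S$ iff it holds in both handed factors of $S$. Since the class of strong distributive solutions is a variety (Theorem \ref{th:str-sol-var}), both factors are themselves strong distributive solutions. In a left handed skew lattice Lemma \ref{lemma:left-right-weak}(i) gives $r_W=r_L$, so after applying $x\land y\land x=x\land y$ and $x\lor y\lor x=y\lor x$ the weak distributive identity collapses to the left distributive identity (\ref{eq:identity-left}); dually in the right handed case it collapses to (\ref{eq:identity-right}). The right handed case is then settled by Theorem \ref{th:right-sol-var}(iii), which states that every strong distributive solution is also a right distributive solution. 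For the left handed case one establishes the analogous fact --- every strong distributive solution is also a left distributive solution --- by a parallel derivation of (\ref{eq:identity-left}) from (\ref{eq:identity-strong1})--(\ref{eq:identity-strong3}).

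For (iii), the strategy is the same. Let $S$ be distributive, simply cancellative, and lower symmetric. Since each of these properties is an identity or an equational implication, Theorem \ref{thm:lrhanded} propagates them from $S$ to its left factor $S/\RR$ and its right factor $S/\LL$. Then Proposition \ref{th:solution-dist-canc-left} yields that $S/\RR$ is a left distributive solution, and Theorem \ref{th:right-sol-var}(v) yields that $S/\LL$ is a right distributive solution. Lemma \ref{lemma:left-right-weak} turns these into the weak distributive identity on each factor, and Theorem \ref{thm:lrhanded} lifts it back to $S$. For (iv), the backward direction is precisely (iii); the forward direction is handled, as in Theorem \ref{th:right-sol-var}(vii), by invoking Prover9 to derive the defining identities of distributivity, simple cancellativity, and lower symmetry from the weak distributive identity of Theorem \ref{th:weak-sol-var}.

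The main obstacle is completing the left handed half of (ii), namely proving the dual of Theorem \ref{th:right-sol-var}(iii). The strong distributive identities (\ref{eq:identity-strong1})--(\ref{eq:identity-strong3}) are not manifestly invariant under the $\land\leftrightarrow\lor$ swap, so a formal appeal to duality is not available and one has to carry out a direct symbolic manipulation in the left handed case.
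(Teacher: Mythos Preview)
Your overall strategy---reduce to the left and right factors via Theorem~\ref{thm:lrhanded} and invoke Lemma~\ref{lemma:left-right-weak} to trade $r_W$ for $r_L$ or $r_R$---is exactly the paper's, and parts (iii) and (iv) are handled in essentially the same way. The only difference in (iii) is cosmetic: the paper passes through \cite[Theorem~5.1]{canc} to convert ``simply cancellative $+$ lower symmetric'' into ``left cancellative'' on $S_L$ and ``right cancellative'' on $S_R$, and then invokes Theorem~\ref{th:solution-dist-canc} and Theorem~\ref{th:right-sol-var}, whereas you cite Proposition~\ref{th:solution-dist-canc-left} and Theorem~\ref{th:right-sol-var}(v) directly. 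Both routes are valid.

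Your treatment of (i) is actually cleaner than the paper's. The paper lumps (i) together with (ii) and appeals to the factored reduction; your direct argument via $x\land y\land x\leq x\leq x\lor y\lor x$ and transitivity of the natural partial order avoids the detour entirely and is correct as stated.

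Regarding (ii) and the obstacle you raise: you have not overlooked anything the paper supplies. The paper's proof of (ii) is the single sentence that (i) and (ii) ``hold because they hold for $S_L$ \dots\ and for $S_R$'', which---once unpacked---requires precisely the two implications you identify: that a left handed strong distributive solution is a left distributive solution, and the right handed analogue. The right analogue is listed as Theorem~\ref{th:right-sol-var}(iii), but that theorem is itself only asserted to be ``proven in a similar fashion as the corresponding results for left distributive solutions,'' and no explicit left analogue of (iii) appears in Section~\ref{sec:sd}. So the gap you flag is present in the paper as well; your diagnosis that a direct symbolic derivation (or an appeal to the unproven arrow ``strong distributive solution $\Rightarrow$ cancellative, distributive'' in the final diagram) is needed is accurate.
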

		
		\begin{proof}
			Denote by $S_L$ and $S_R$ the left and the right factor of $S$, respectively. 
			
			(i) and (ii) hold because they hold for $S_L$ (where by Lemma \ref{lemma:left-right-weak} $r_W$ reduces to $r_L$) and for $S_R$ (where $r_W$ reduces to $r_R$).
			
			(iii) Let $S$ be a distributive, simply cancellative  and lower symmetric skew lattice. By Theorem \ref{thm:lrhanded} it is enough to prove that both $S_L$ and $S_R$ are weak distributive solutions. Since $S_L$ is left handed, it follows from Lemma \ref{lemma:left-right-weak} that $S_L$ is a weak distributive solution if and only if it is a left distributive solution; likewise, $S_R$ is  a weak distributive solution if and only if it is a right distributive solution. By \cite[Theorem 5.1]{canc}, a left handed skew lattice is left cancellative if and only  if it is lower symmetric and simply cancellative; likewise, a right handed skew lattice is right cancellative if and only if it is lower symmetric and simply cancellative. Hence  $S_L$ is distributive and left cancellative, and thus a left distributive solution by Theorem \ref{th:solution-dist-canc}. Likewise, $S_R$ is distributive and right  cancellative, and thus a right distributive solution by Theorem \ref{th:right-sol-var}.
			
			Prover9 was able to derive a proof of (iv).
		\end{proof}
		
		The skew lattice from Example \ref{ex1} is a weak distributive solution. The associated map $r_W$ is defined by $r_W(x,y)=(x,x)$, for all $x,y \in S$. Thus, $(S,r_W)$ is a right non-degenerate solution.
		%Similar to strong distributive solutions, the set-theoretic solution obtained from a weak distributive solutions will be degenerate in general. Nevertheless, there are examples where the solution is non-degenerate. Take for instance the skew lattice from Example \ref{ex1}. This skew lattice $S$ is a weak distributive solution and one can see that $(S,r_W)$ is a right non-degenerate solution.
		
		By \cite[Theorem 5.1]{canc} different kinds of cancellation (left/right/simple/full) coincide in the presence of symmetry. As a consequence, we obtain the following result.
		
		%{\color{blue} I think we should speak of "simply cancellative" but "simple cancellation".}
		
		\begin{corollary} Let S be a symmetric skew lattice. The following properties are equivalent:
			\begin{enumerate}
				\item $S$ is a left distributive solution.
				\item $S$ is a right distributive solution.
				\item $S$ is a weak distributive solution.
			\end{enumerate}
		\end{corollary}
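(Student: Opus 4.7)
The plan is to reduce the three statements to their equational/implicational characterizations already established in this section, and then invoke the collapse of cancellation conditions in the symmetric setting. Specifically, Theorem~\ref{th:solution-dist-canc} characterizes (1) as ``$S$ is distributive and left cancellative,'' Theorem~\ref{th:right-sol-var}(vii) characterizes (2) as ``$S$ is distributive and right cancellative,'' and Theorem~\ref{th:weak-sol}(iv) characterizes (3) as ``$S$ is distributive, simply cancellative and lower symmetric.'' Thus the corollary reduces to showing that, under the symmetry hypothesis on $S$, these three conjunctions of properties coincide.

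First I would observe that by definition symmetry is the conjunction of upper and lower symmetry, so lower symmetry in the characterization of (3) comes for free. The remaining task is to match the three flavors of cancellation. I would then invoke \cite[Theorem~5.1]{canc} (cited and used in the proofs of Theorem~\ref{th:solution-dist-canc} and Theorem~\ref{th:weak-sol}(iii)), which states that for symmetric skew lattices left cancellativity, right cancellativity, simple cancellativity, and full cancellativity all coincide. Since distributivity is common to all three characterizations, this gives the chain of equivalences (1)\,$\Leftrightarrow$\,(2)\,$\Leftrightarrow$\,(3).

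Concretely, I would argue as follows. Assume $S$ is symmetric. If $S$ satisfies (1), then by Theorem~\ref{th:solution-dist-canc} it is distributive and left cancellative; by symmetry and \cite[Theorem~5.1]{canc} it is then also right cancellative and simply cancellative, hence (2) follows from Theorem~\ref{th:right-sol-var}(vii) and (3) follows from Theorem~\ref{th:weak-sol}(iv) together with lower symmetry. The implications (2)\,$\Rightarrow$\,(1),(3) and (3)\,$\Rightarrow$\,(1),(2) are symmetric in form and follow from the same appeal to \cite[Theorem~5.1]{canc}.

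There is no real obstacle here since all the heavy lifting has been done in the preceding theorems; the main point is simply to cite the equational characterizations and the collapse of cancellation under symmetry. If anything, the only care needed is to verify that one is entitled to invoke Theorem~\ref{th:right-sol-var}(vii) and Theorem~\ref{th:weak-sol}(iv), both of which are labelled as holding modulo \emph{Prover9}; this caveat should be inherited by the corollary, or alternatively one can state the corollary assuming these results, which is what the paper does implicitly.
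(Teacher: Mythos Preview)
Your proposal is correct and follows essentially the same approach as the paper: reduce each of (1)--(3) to its characterization in terms of distributivity plus a flavor of cancellation (via Theorem~\ref{th:solution-dist-canc}, Theorem~\ref{th:right-sol-var}(vii), and Theorem~\ref{th:weak-sol}(iv)), then invoke \cite[Theorem~5.1]{canc} to collapse the cancellation conditions under symmetry. Your explicit remark that lower symmetry comes for free and your caveat about the \emph{Prover9}-dependent results are accurate observations that the paper leaves implicit.
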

		
		\begin{proof}
			If a skew lattice is symmetric, then it is left cancellative if and only if it is right cancellative if and only if it is cancellative if and only if it is simply cancellative. By Theorem \ref{th:weak-sol}, a  skew lattice $S$ is a weak distributive solution if and only if it is distributive, lower symmetric and {simply} cancellative; by Theorem \ref{th:solution-dist-canc}, $S$ is a left distributive solution if and only if it is distributive and left cancellative; by Theorem \ref{th:right-sol-var}, $S$ is a right distributive if and only if it is distributive and right cancellative. It follows that all three notions of distributive solutions are equivalent for the class of symmetric skew lattices.
		\end{proof}
		
		One can easily notice that for a lattice, the maps (\ref{eq:defR}), $r_L, r_R, r_W$ coincide. Thus we have the following proposition.
		
		\begin{proposition}
			The following conditions are equivalent for a lattice $(L,\land,\lor)$,
			\begin{enumerate}
				\item $L$ is a strong distributive solution,
				\item $L$ is a left distributive solution,
				\item $L$ is a right distributive solution,
				\item $L$ is a weak distributive solution.
			\end{enumerate}
			One, and thus all of the above conditions are satisfied if and only if the lattice $L$ is distributive. 
		\end{proposition}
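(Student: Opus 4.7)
The plan is to exploit the observation made just before the proposition: once both operations are commutative, the four maps defining the four solution types collapse to the same map. Concretely, in a lattice we have $x\land y = y\land x$ and $x\lor y = y\lor x$, and by idempotency $x\land y\land x = x\land y$ and $x\lor y\lor x = x\lor y$. Substituting these equalities into the definitions gives
\begin{align*}
r(x,y)   &= (x\land y,\; x\lor y),\\
r_L(x,y) &= (x\land y,\; y\lor x) = (x\land y,\; x\lor y),\\
r_R(x,y) &= (y\land x,\; x\lor y) = (x\land y,\; x\lor y),\\
r_W(x,y) &= (x\land y\land x,\; x\lor y\lor x) = (x\land y,\; x\lor y),
\end{align*}
so $r = r_L = r_R = r_W$ as maps $L\times L \to L\times L$.

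From this the equivalence of (1)--(4) is immediate: the defining property for each of the four notions is the Yang--Baxter equation \eqref{eq:yb} for the \emph{same} map, hence one holds if and only if all do.

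For the final assertion it remains to identify this common condition with distributivity of $L$. This is precisely the classical fact recalled in the introduction and again at the start of Section~4: for the map $r(x,y)=(x\land y,x\lor y)$ associated with a lattice $(L,\land,\lor)$, the pair $(L,r)$ satisfies \eqref{eq:yb} if and only if $L$ is a distributive lattice. Since (1) is by definition exactly this condition, we conclude that (1)--(4) hold precisely when $L$ is distributive. No genuine obstacle arises; the only thing to be careful about is to use commutativity and idempotency to reduce all four expressions to the strong-distributive map before invoking the classical characterization.
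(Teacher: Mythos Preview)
Your proof is correct and follows exactly the approach the paper indicates: the paper simply remarks, just before stating the proposition, that for a lattice the four maps $r$, $r_L$, $r_R$, $r_W$ coincide, and leaves the proposition without further proof. You have made this observation explicit and correctly combined it with the classical characterization of distributive lattices via the Yang--Baxter equation.
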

		%The diagram below gives an overview of all solutions that we discussed in this paper, where we abbreviate skew lattice by SL and the arrows are inclusions between families of skew lattices.
		%From the diagram below, the following corollary is clear.
		%\begin{corollary}
		%The skew lattice constructed in Proposition \ref{prop:construction} is a left, right and weak distributive solution of the Yang-Baxter equation.
		%\end{corollary}
		\[\begin{tikzcd}[column sep = -4.35cm, row sep = 0.45cm]
		& \text{Distributive lattice} 
		\arrow{d} & 
		\\ 
		& \text{Strongly and co-strongly distributive SL} 
		\arrow{d} & 
		\\ 
		& \text{Strong distributive solution} \arrow{d} & 
		\\
		& \text{Cancellative, distributive SL} \arrow{dl} \arrow{dd} \arrow{dr} & 
		\\ 
		{\fbox{\Centerstack[c]{Left cancellative, distributive SL\\ $=$ \\ {Left distributive solution}}}} %\arrow[bend right=100]{ddr} 
		\arrow[rounded corners, to path={ -- ([xshift=-1.795cm]\tikztostart.south) |- (\tikztotarget)}]{rdd} & &  {\fbox{\Centerstack[c]{Right cancellative, distributive SL\\ $=$ \\ {Right distributive solution}}}} {\arrow[rounded corners, to path={ -- ([xshift=1.795cm]\tikztostart.south) |- (\tikztotarget)}]{ldd}}
		\\
		& {\fbox{\Centerstack[c]{Simply cancellative, distributive, lower symmetric SL\\ $=$ \\ {Weak distributive solution}}}} {\arrow{d}} &
		\\
		& {\text{Skew lattice}} {\arrow{d}} &
		\\
		& {\text{Solution $r(x,y) = ((x \land y) \lor x, y)$}} &
		\end{tikzcd}\]
		
		The diagram above gives an overview of all solutions that we discussed in this paper, where we abbreviate skew lattice by SL and the arrows are inclusions between families of skew lattices.
		
		From the diagram, the following corollary is clear.
		\begin{corollary}
			The skew lattice constructed in Proposition \ref{prop:construction} is a left, right and weak distributive solution of the Yang-Baxter equation.
		\end{corollary}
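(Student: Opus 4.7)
The plan is to simply invoke Proposition~\ref{prop:construction} together with the characterizations of the three notions of distributive solutions established earlier. Proposition~\ref{prop:construction} tells us that the skew lattice $S=\bigcupdot_{i\in I}A_i$ is both distributive and (fully) cancellative. I would use this as the single hypothesis feeding into each of the three theorems that characterize the solution classes.

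First, for the left distributive case, I would observe that full cancellativity (satisfying both \eqref{C1} and \eqref{C2}) implies in particular that implication \eqref{C1} holds, i.e., $S$ is left cancellative. Together with distributivity, Theorem~\ref{th:solution-dist-canc} immediately yields that $S$ is a left distributive solution. Dually, implication \eqref{C2} gives right cancellativity, so Theorem~\ref{th:right-sol-var}(vii) produces the right distributive solution conclusion.

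For the weak distributive case, I would use that a cancellative skew lattice is, by definition, simply cancellative and symmetric (as recalled just before Lemma~\ref{lemma:coset-law}); in particular it is simply cancellative and lower symmetric. Combined with distributivity, Theorem~\ref{th:weak-sol}(iv) gives that $S$ is a weak distributive solution.

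There is no real obstacle here: the proof is a direct application of the classification results assembled in the previous section, and the summary diagram preceding the corollary already encodes exactly this deduction. The only thing to be careful about is matching the various cancellation notions correctly (full cancellative $\Rightarrow$ left, right, and simply cancellative; cancellative $\Rightarrow$ symmetric $\Rightarrow$ lower symmetric), but each of these implications is immediate from the definitions.
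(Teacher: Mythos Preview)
Your proposal is correct and matches the paper's approach: the paper simply states that the corollary is clear from the summary diagram, which encodes precisely the implications you spell out. One small refinement: for the right and weak cases you only need the forward implications, so you could cite Theorem~\ref{th:right-sol-var}(vi) and Theorem~\ref{th:weak-sol}(iii) rather than the ``if and only if'' parts (vii) and (iv), thereby avoiding any reliance on the \emph{Prover9}-assisted converses.
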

		
		\subsection{Solutions in rings}
		Quadratic  skew lattices in rings are cancellative and distributive by  \cite[Theorems 2.6 and 2.8]{L1}. Cubic skew lattices in rings are cancellative and distributive by \cite[Corollary 5]{Cvet07}. The following pair of results are immediate corollaries of Theorem \ref{th:solution-dist-canc}.
		
		\begin{corollary}
			Let $R$ be a ring and $S\subseteq E(R)$ a multiplicative band that is closed under the operation $\circ$. Then $(S,\cdot,\circ)$ is a left, right and weak distributive solution of the Yang-Baxter equation.
		\end{corollary}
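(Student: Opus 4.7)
The plan is to combine the structural result on quadratic skew lattices in rings with the characterization theorems for the various distributive solutions developed in Sections 4 and 5. First, observe that by Leech's result recalled in the preliminaries, a multiplicative band $(S,\cdot)$ that is closed under $\circ$ is a skew lattice $(S,\cdot,\circ)$, namely a quadratic skew lattice. The paragraph immediately preceding the corollary then supplies the essential fact: by \cite[Theorems 2.6 and 2.8]{L1}, every quadratic skew lattice in a ring is both distributive and (fully) cancellative. So the proof reduces to showing that a distributive and cancellative skew lattice automatically satisfies the hypotheses of each of the three characterization theorems.

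Next, I would unwind what full cancellativity yields in terms of the finer hypotheses appearing in Theorems \ref{th:solution-dist-canc}, \ref{th:right-sol-var}(vii) and \ref{th:weak-sol}(iv). By definition, a cancellative skew lattice satisfies both implications \eqref{C1} and \eqref{C2}, hence it is simultaneously left cancellative and right cancellative. Moreover, as recalled in the paper (see the comment following the definition of simple cancellativity), a skew lattice is cancellative if and only if it is simply cancellative and symmetric, and symmetry of course implies lower symmetry. Consequently $(S,\cdot,\circ)$ satisfies all three packages of hypotheses simultaneously.

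Finally, I would invoke each characterization in turn: by Theorem \ref{th:solution-dist-canc}, distributive plus left cancellative gives a left distributive solution; by Theorem \ref{th:right-sol-var}(vii), distributive plus right cancellative gives a right distributive solution; and by Theorem \ref{th:weak-sol}(iv), distributive plus simply cancellative plus lower symmetric gives a weak distributive solution. Together these three applications yield exactly the three claims of the corollary. There is essentially no obstacle: the corollary is a direct assembly of previously proved results, and the only mildly substantive point is the bookkeeping step of verifying that full cancellativity implies each of the specific cancellation/symmetry hypotheses that the three characterizations demand.
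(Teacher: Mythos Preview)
Your proposal is correct and matches the paper's approach: the paper simply declares this an ``immediate corollary'' of Theorem~\ref{th:solution-dist-canc} (implicitly together with its right and weak analogues), and your argument is exactly the natural unpacking of that claim. One small refinement: for the right and weak cases you only need the forward implications, so citing Theorem~\ref{th:right-sol-var}(vi) and Theorem~\ref{th:weak-sol}(iii) rather than the full characterizations (vii) and (iv) would avoid any dependence on the \emph{Prover9} results.
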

		
		\begin{corollary}
			Let $R$ be a ring and $S\subseteq E(R)$ a multiplicative band such that the operation $\nabla$ is closed and associative on $S$. Then $(S,\cdot,\nabla)$ is a left, right and weak distributive solution of the Yang-Baxter equation.
		\end{corollary}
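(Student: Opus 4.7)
The plan is to reduce the statement to the characterization theorems for distributive solutions already established in this paper. First, I would invoke Leech's result recalled in Section~1: if $(S,\cdot)$ is a multiplicative band in a ring $R$ and the operation $\nabla$ is closed and associative on $S$, then $(S,\cdot,\nabla)$ is a cubic skew lattice. Next, I would cite the ring-theoretic input mentioned in the paragraph just before the two final corollaries, namely that by \cite[Corollary~5]{Cvet07} every cubic skew lattice in a ring is both cancellative and distributive.

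With $(S,\cdot,\nabla)$ identified as a cancellative and distributive skew lattice, the three claimed conclusions follow at once by invoking the appropriate characterization results already proved in Section~5. Explicitly: cancellativity implies left cancellativity, so by Theorem~\ref{th:solution-dist-canc} the skew lattice $S$ is a left distributive solution; cancellativity also implies right cancellativity, so by Theorem~\ref{th:right-sol-var}(vi) it is a right distributive solution; finally, cancellativity implies both simple cancellativity and symmetry, and symmetry implies lower symmetry, so Theorem~\ref{th:weak-sol}(iii) gives that $S$ is a weak distributive solution as well.

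The argument is therefore essentially a bookkeeping exercise, parallel to the preceding corollary for quadratic skew lattices. There is no substantive obstacle: all the difficult work lies in the characterization theorems from Section~5 and in the ring-theoretic facts about cubic skew lattices recalled from \cite{L1,L4,Cvet07}. The only minor point to verify explicitly is that full cancellativity really does supply each of the precise hypotheses (\emph{left} cancellative, \emph{right} cancellative, and \emph{simply cancellative plus lower symmetric}) demanded by the three characterization theorems, which is immediate from the definitions given on p.~\pageref{cancellative} and p.~\pageref{simply cancellative} together with the observation that symmetric skew lattices are in particular lower symmetric.
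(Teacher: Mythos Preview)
Your proposal is correct and follows essentially the same route as the paper: the paper states this as an immediate corollary, having just recorded that cubic skew lattices in rings are cancellative and distributive by \cite[Corollary~5]{Cvet07}, and then appeals to Theorem~\ref{th:solution-dist-canc}. Your write-up is in fact more precise than the paper's one-line justification, since you correctly invoke Theorem~\ref{th:right-sol-var}(vi) and Theorem~\ref{th:weak-sol}(iii) for the right and weak cases rather than only Theorem~\ref{th:solution-dist-canc}.
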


		\section*{Acknowledgment}
		The first author acknowledges the financial support from the Slovenian Research Agency (research core funding No. P1-0222). The second author is supported by Fonds voor Wetenschappelijk Onderzoek (Flanders), via an FWO Aspirant-mandate.
		
\section*{References}

	\end{document}